\newcommand{\burl}[1]{\textcolor{blue}{\url{#1}}}
\newcommand\be{\begin{equation}}
\newcommand\ee{\end{equation}}
\newcommand\bea{\begin{eqnarray}}
\newcommand\eea{\end{eqnarray}}
\theoremstyle{plain}
\numberwithin{equation}{section}
\newtheorem{thm}{Theorem}[section]
\newtheorem{cor}[thm]{Corollary}
\newtheorem{lem}[thm]{Lemma}
\newtheorem{prop}[thm]{Proposition}
\newtheorem{exa}[thm]{Example}
\newtheorem{defi}[thm]{Definition}
\newtheorem{defn}[thm]{Definition}
\newtheorem{remark}[thm]{Remark}
\begin{document}

\fancyhead{}
\renewcommand{\headrulewidth}{0pt}
\fancyfoot{}
\fancyfoot[LE,RO]{\medskip \thepage}
\fancyfoot[LO]{\medskip MONTH YEAR}
\fancyfoot[RE]{\medskip VOLUME , NUMBER }

\setcounter{page}{1}

\title{A Generalization of Fibonacci Far-Difference Representations and Gaussian Behavior}

\author[Demontigny]{Philippe Demontigny}
\email{\textcolor{blue}{\href{mailto:Philippe.P.Demontigny@williams.edu}{Philippe.P.Demontigny@williams.edu}}
}
\address{Department of Mathematics \& Statistics, Williams College, Williamstown, MA 01267}

\author[Do]{Thao Do}
\email{\textcolor{blue}{\href{mailto:thao.do@stonybrook.edu}{thao.do@stonybrook.edu}}}
\address{Mathematics Department, Stony Brook University, Stony Brook, NY, 11794}

\author[Kulkarni]{Archit Kulkarni}
\email{\textcolor{blue}{\href{mailto:auk@andrew.cmu.edu}{auk@andrew.cmu.edu}}}
\address{Department of Mathematical Sciences, Carnegie Mellon University, Pittsburgh, PA 15213}

\author[Miller]{Steven J. Miller}
\email{\textcolor{blue}{\href{mailto:sjm1@williams.edu}{sjm1@williams.edu}},  \textcolor{blue}{\href{Steven.Miller.MC.96@aya.yale.edu}{Steven.Miller.MC.96@aya.yale.edu}}}
\address{Department of Mathematics and Statistics, Williams College, Williamstown, MA 01267}

\author[Varma]{Umang Varma}
\email{\textcolor{blue}{\href{mailto:Umang.Varma10@kzoo.edu}{Umang.Varma10@kzoo.edu}}}
\address{Department of Mathematics \& Computer Science, Kalamazoo College, Kalamazoo, MI, 49006}

\thanks{This research was conducted as part of the 2013 SMALL REU program at Williams College and was partially supported funded by NSF grant DMS0850577 and Williams College; the fourth named author was also partially supported by NSF grant DMS1265673. We would like to thank our  colleagues from the Williams College 2013 SMALL REU program for helpful discussions, especially Taylor Corcoran, Joseph R. Iafrate, David Moon, Jaclyn Porfilio, Jirapat Samranvedhya and Jake Wellens, and the referee for many helpful comments.}

\begin{abstract} A natural generalization of base $B$ expansions is Zeckendorf's Theorem, which states that every integer can be uniquely written as a sum of non-consecutive Fibonacci numbers $\{F_n\}$, with $F_{n+1} = F_n + F_{n-1}$ and $F_1=1, F_2=2$. If instead we allow the coefficients of the Fibonacci numbers in the decomposition to be zero or $\pm 1$, the resulting expression is known as the far-difference representation. Alpert proved that a far-difference representation exists and is unique under certain restraints that generalize non-consecutiveness, specifically that two adjacent summands of the same sign must be at least 4 indices apart and those of opposite signs must be at least 3 indices apart.

In this paper we prove that a far-difference representation can be created using sets of Skipponacci numbers, which are generated by recurrence relations of the form $S^{(k)}_{n+1} = S^{(k)}_{n} + S^{(k)}_{n-k}$ for $k \ge 0$. Every integer can be written uniquely as a sum of the $\pm S^{(k)}_n $'s such that every two terms of the same sign differ in index by at least $2k+2$, and every two terms of opposite signs differ in index by at least $k+2$. Let $I_n = (R_k(n-1), R_k(n)]$ with $R_k(\ell) = \sum_{0 < \ell-b(2k+2) \le \ell} S^{(k)}_{\ell-b(2k+2)}$. We prove that the number of positive and negative terms in given Skipponacci decompositions for $m \in I_n$ converges to a Gaussian as $n\to\infty$, with a computable correlation coefficient. We next explore the distribution of gaps between summands, and show that for any $k$ the probability of finding a gap of length $j \ge 2k+2$ decays geometrically, with decay ratio equal to the largest root of the given $k$-Skipponacci recurrence. We conclude by finding sequences that have an $(s,d)$ far-difference representation (see Definition \ref{def:sdfardiffrep}) for any positive integers $s,d$.
\end{abstract}

\subjclass[2010]{11B39, 11B05  (primary) 65Q30, 60B10 (secondary)}

\keywords{Zeckendorf decompositions, far difference decompositions, gaps, Gaussian behavior.}

\maketitle

\tableofcontents

\section{Introduction}

In this paper we explore signed decompositions of integers by various sequences. After briefly reviewing the literature, we state our results about uniqueness of decomposition, number of summands, and gaps between summands. In the course of our analysis we find a new way to interpret an earlier result about far-difference representations, which leads to a new characterization of the Fibonacci numbers.

\subsection{Background}

Zeckendorf \cite{Ze} discovered an interesting property of the Fibonacci numbers $\{F_n\}$; he proved that every positive integer can be written uniquely as a sum of non-consecutive Fibonacci numbers\footnote{If we were to use the standard definition of $F_0 = 0$, $F_1 = 1$ then we would lose uniqueness.}, where $F_{n+2} = F_{n+1} + F_n$ and $F_1 = 1, F_2 = 2$. It turns out this is an alternative characterization of the Fibonacci numbers; they are the unique increasing sequence of positive integers such that any positive number can be written uniquely as a sum of non-consecutive terms.

Zeckendorf's theorem inspired many questions about the number of summands in these and other decompositions. Lekkerkerker \cite{Lek} proved that the average number of summands in the decomposition of an integer in $[F_n, F_{n+1})$ is $\frac{n}{\varphi^2+1} + O(1)$, where $\varphi = \frac{1+\sqrt{5}}{2}$ is the golden mean (which is the largest root of the characteristic polynomial associated with the Fibonacci recurrence). More is true; as $n\to\infty$, the distribution of the number of summands of $m \in [F_n, F_{n+1})$ converges to a Gaussian. This means that as $n\to\infty$ the fraction of $m \in [F_n, F_{n+1})$ such that the number of summands in $m$'s Zeckendorf decomposition is in $[\mu_n - a\sigma_n, \mu_n + b\sigma_n]$ converges to $\frac1{\sqrt{2\pi}} \int_a^b e^{-t^2/2}dt$, where $\mu_n = \frac{n}{\varphi^2+1} + O(1)$ is the mean number of summands for $m \in [F_n, F_{n+1})$ and $\sigma_n^2 = \frac{\varphi}{5(\varphi+2)}n-\frac{2}{25}$ is the variance (see \cite{KKMW} for the calculation of the variance). \emph{Henceforth in this paper whenever we say the distribution of the number of summand converges to a Gaussian, we mean in the above sense.} There are many proofs of this result; we follow the combinatorial approach used in \cite{KKMW}, which proved these results by converting the question of how many numbers have exactly $k$ summands to a combinatorial one.

These results hold for other recurrences as well. Most of the work in the field has focused on Positive Linear Recurrence Relations (PLRS), which are recurrence relations of the form $G_{n+1} = c_1G_n + \cdots + c_L G_{n+1-L}$ for non-negative integers $L,c_1,c_2,\dots,c_L$ with $L,c_1,$ and $c_n > 0$ (these are called $G$-ary digital expansions in \cite{St}). There is an extensive literature for this subject; see \cite{Al,BCCSW,Day,GT,Ha,Ho,Ke,Len,MW1,MW2} for results on uniqueness of decomposition and \cite{DG,FGNPT,GTNP,KKMW,Lek,LT,MW1,St} for Gaussian behavior.

Much less is known about signed decompositions, where we allow negative summands in our decompositions. This opens up a number of possibilities, as in this case we can overshoot the value we are trying to reach in a given decomposition, and then subtract terms to reach the desired positive integer. We formally define this idea below.

\begin{defn}[Far-difference representation]
A \emph{far-difference representation} of a positive integer $x$ by a sequence $\{a_n\}$ is a signed sum of terms from the sequence which equals $x$.
\end{defn}

The Fibonacci case was first considered by Alpert \cite{Al}, who proved the following analogue of Zeckendorf's theorem. Note that the restrictions on the gaps between adjacent indices in the decomposition is a generalization of the non-adjacency condition in the Zeckendorf decomposition.

\begin{thm} \label{thm:alpert}
Every $x \in \mathbb{Z}$ has a unique Fibonacci far-difference representation such that every two terms of the same sign differ in index by at least 4 and every two terms of opposite sign differ in index by at least 3.
\end{thm}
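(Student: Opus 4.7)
I would prove existence by a greedy algorithm and uniqueness by a counting argument, both organized around the quantity $R_n := F_n + F_{n-4} + F_{n-8} + \cdots$, where the sum continues as long as indices remain positive. Note $R_n$ is the maximum positive integer expressible as a valid far-difference sum whose leading (largest-index) term is $+F_n$, since any subsequent positive summand must differ by at least $4$ in index. A short Fibonacci identity gives the recurrence $R_n = F_n + R_{n-4}$, and checking $R_{n-1} < R_n$ shows that the intervals $(R_{n-1},R_n]$ partition $\Z_{>0}$.

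\textbf{Greedy existence.} Given $x > 0$, let $n$ be the unique index with $x \in (R_{n-1},R_n]$. Emit $+F_n$ as the leading summand and recurse on $r := x - F_n$. If $r \ge 0$, the bound $r \le R_n - F_n = R_{n-4}$ forces the next positive summand to have index at most $n-4$, preserving the gap-$4$ condition between same-sign terms. If $r < 0$, the key step is to show $|r| \le R_{n-3}$, so that the next (now negative) leading index is at most $n-3$, preserving the gap-$3$ condition between opposite-sign terms. This reduces to a Fibonacci identity of the form $F_n - R_{n-1} = R_{n-3}$ (or its precise analogue), verified inductively. Termination follows because $|r| < F_n$, strictly decreasing the active index; the case $x < 0$ is handled symmetrically by flipping all signs, and $x = 0$ is the empty sum.

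\textbf{Uniqueness via counting.} Let $a_n$ denote the number of valid far-difference representations whose indices all lie in $\{1,\dots,n\}$. Conditioning on whether $F_n$ appears with sign $+$, with sign $-$, or not at all, and using the gap restrictions to restrict the allowed indices of the remaining summands, yields a linear recurrence for $a_n$ whose closed form is $a_n = 2R_n + 1$. This matches the cardinality of $[-R_n,R_n] \cap \Z$. Since the greedy algorithm exhibits, for every integer in this interval, at least one valid representation with indices $\le n$, equality of cardinalities forces the representation map to be a bijection, proving uniqueness.

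\textbf{Main obstacle.} The delicate point is the Fibonacci identity $F_n - R_{n-1} = R_{n-3}$ (or its exact analogue) used to bound $|r|$ when the greedy step produces a sign flip: this is precisely what makes the greedy choice respect both the gap-$4$ and gap-$3$ constraints simultaneously as the algorithm progresses. Once this identity is in hand, the rest is bookkeeping with telescoping Fibonacci sums and elementary enumeration, and the argument is structured so that it generalizes naturally to the Skipponacci setting treated later in the paper.
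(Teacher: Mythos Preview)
Your existence argument is essentially the paper's: the paper proves the generalization (Theorem~\ref{Thm:Far-Diff}) by the same greedy/inductive scheme, and your ``precise analogue'' of $F_n - R_{n-1} = R_{n-3}$ is exactly Lemma~\ref{Lem:R+R=S-1} with $k=1$, namely $F_n - R_4(n-1) - R_4(n-3) = 1$, which gives $|r| = F_n - x < F_n - R_4(n-1) = R_4(n-3)+1$, hence $|r|\le R_4(n-3)$ as you need.

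Where you differ is uniqueness. The paper does \emph{not} count: it observes that the intervals $[F_n - R_4(n-3),\,R_4(n)]$ partition $\Z_{>0}$ (again Lemma~\ref{Lem:R+R=S-1}), so the leading term of any valid representation of $x$ is forced to be $F_n$; uniqueness then comes out of the same induction that gives existence, with no separate argument. Your counting route is a genuine alternative and works, but your sketch understates one wrinkle: conditioning on the status of $F_n$ does not yield a closed recurrence in $a_n$ alone, because after placing $+F_n$ the admissible index for the next term depends on its sign ($\le n-4$ if positive, $\le n-3$ if negative). You need an auxiliary count such as $u_n = \#\{\text{valid reps with leading term exactly }+F_n\}$; then $u_n = 1 + \sum_{j\le n-4} u_j + \sum_{j\le n-3} u_j$, and induction plus the same Lemma~\ref{Lem:R+R=S-1} gives $\sum_{j\le n} u_j = R_4(n)$, whence $a_n = 2R_4(n)+1$ by sign symmetry. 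So your plan is correct with that refinement. The paper's approach is a little leaner (one induction does both jobs), while yours has the appeal of a pigeonhole finish and makes the enumeration $a_n = 2R_4(n)+1$ explicit.
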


For example, 2014 can be decomposed as follows:
\be
2014 \ = \ 2584 - 610 + 55 - 13 - 2 \ = \ F_{17} - F_{14} + F_9 - F_6 - F_2.
\ee
Alpert's proof uses induction on a partition of the integers, and the method generalizes easily to other recurrences which we consider in this paper.

Given that there is a unique decomposition, it is natural to inquire if generalizations of Lekkerkerker's Theorem and Gaussian behavior hold as well. Miller and Wang \cite{MW1} proved that they do. We first set some notation, and then describe their results (our choice of notation is motivated by our generalizations in the next subsection).

First, let $R_4(n)$ denote the following summation

\begin{equation} \label{R4(n)}
R_4(n) \ := \
\begin{cases}
\sum_{0 < n-4i \le n} F_{n-4i} \ = \  F_n + F_{n-4} + F_{n-8} + \cdots & \text{ if } n > 0 \\
0 & \text{ otherwise.}
\end{cases}
\end{equation}

Using this notation, we state the motivating theorem from Miller-Wang.

\begin{thm}[Miller-Wang] \label{thm:MW1 result}
Let $\mathcal{K}_n$ and $\mathcal{L}_n$ be the corresponding random variables denoting the number of positive summands and the number of negative summands in the far-difference representation (using the signed Fibonacci numbers) for integers in $(R_4(n-1), R_4(n)]$. As $n$ tends to infinity, $\mathbb{E}[\mathcal{K}_n] = \frac{1}{10}n + \frac{371-113\sqrt{5}}{40} + o(1)$, and is $\frac{1+\sqrt{5}}{4} = \frac{\varphi}{2}$ greater than $\mathbb{E}[\mathcal{L}_n]$. The variance of both is $\frac{15 + 21\sqrt{5}}{1000}n + O(1)$. The standardized joint density of $\mathcal{K}_n$ and $\mathcal{L}_n$ converges to a bivariate Gaussian with negative correlation $\frac{10\sqrt{5}-121}{179} = -\frac{21-2\varphi}{29+2\varphi} \approx -0.551$, and $\mathcal{K}_n + \mathcal{L}_n$ and $\mathcal{K}_n - \mathcal{L}_n$ converge to independent random variables.
\end{thm}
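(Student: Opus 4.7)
The plan is to follow the combinatorial/generating-function approach pioneered in \cite{KKMW} for the Zeckendorf setting. First I would count $p_n(k,\ell)$, the number of integers in $(R_4(n-1), R_4(n)]$ whose far-difference representation uses exactly $k$ positive and $\ell$ negative summands. By the definition of $R_4(n)$, integers in this interval are precisely those whose largest positive summand is $F_n$, so a valid representation is determined by a choice of further positive indices $n > n_2 > \cdots > n_k \geq 1$ and negative indices $m_1 > m_2 > \cdots > m_\ell \geq 1$ subject to Alpert's gap conditions (same-sign gap $\geq 4$, opposite-sign gap $\geq 3$).

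Next I would package this count into the bivariate generating function
\[
f_n(x,y) \;=\; \sum_{k,\ell \geq 0} p_n(k,\ell)\, x^k y^\ell
\]
and derive a linear recurrence in $n$ by conditioning on the sign and index of the summand immediately below $F_n$. This makes $\sum_n f_n(x,y)\, z^n$ a rational function of $z$ whose denominator, specialized at $x=y=1$, is (essentially) the characteristic polynomial of the Fibonacci recurrence, so its dominant singularity sits at $z = 1/\varphi$.

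From here the means, variances, and covariance of $\mathcal{K}_n$ and $\mathcal{L}_n$ are obtained by differentiating in $x$ and $y$ at $(1,1)$ and reading off the singular expansion around $z = 1/\varphi$. The linear-in-$n$ leading terms produce the coefficients $\frac{1}{10}$ of $n$ in the mean and $\frac{15 + 21\sqrt 5}{1000}$ of $n$ in the variance, while the constants (the additive $\frac{371-113\sqrt 5}{40}$, the $\varphi/2$ gap between the two means, and the correlation $\rho = \frac{10\sqrt 5 - 121}{179}$) come from a careful tracking of the next-order correction in the singular expansion. For the bivariate Gaussian limit I would apply Hwang's quasi-power theorem in two variables (or equivalently show pointwise convergence of the rescaled moment-generating function $\mathbb{E}\!\left[e^{s(\mathcal{K}_n - \mu_n^K)/\sigma_n + t(\mathcal{L}_n - \mu_n^L)/\sigma_n}\right]$ to $e^{(s^2+t^2+2\rho st)/2}$), using smoothness of the location of the dominant singularity of $\sum_n f_n(x,y)\, z^n$ in $(x,y)$ near $(1,1)$. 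Asymptotic independence of $\mathcal{K}_n + \mathcal{L}_n$ and $\mathcal{K}_n - \mathcal{L}_n$ then reduces to verifying that the limiting covariance matrix is diagonalized by the $45^\circ$ rotation, i.e.\ that $\mathrm{Var}(\mathcal{K}_n)$ and $\mathrm{Var}(\mathcal{L}_n)$ agree to leading order, which is already visible from the symmetry of the leading singular term.

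The main obstacle is the asymmetry in Alpert's gap conditions: the $\geq 4$ constraint on same-sign summands versus the $\geq 3$ constraint on opposite-sign summands couples $k$ and $\ell$ nontrivially, so the recurrence for $f_n(x,y)$ does not factor as an independent product in $x$ and $y$. Handling this coupling carefully — especially producing the exact negative correlation $\frac{10\sqrt 5 - 121}{179}$ rather than merely showing it is negative — is where most of the calculation lives. Once that recurrence is set up correctly, everything else (Gaussian convergence, independence of the sum and difference) follows from standard analytic-combinatorics machinery applied to the resulting rational bivariate generating function.
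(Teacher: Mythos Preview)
Your proposal is correct and follows essentially the same route as the paper's treatment of its generalization (Theorem~\ref{thm:Gaussianity[MW]}, of which Theorem~\ref{thm:MW1 result} is the $k=1$ case; the paper quotes Theorem~\ref{thm:MW1 result} from \cite{MW1} rather than reproving it directly). The paper likewise sets up the trivariate count $p_{n,m,\ell}$, derives a recurrence by conditioning on the summand below the leading term, and obtains a rational $G_k(x,y,z)$ whose denominator at $(x,y)=(1,1)$ has smallest root $1/\varphi$ (for $k=1$ the factorization in Appendix~\ref{sec:propmainres} gives $(z^4-1)(z^2+z-1)(z^2-z+1)$).

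The one genuine difference is in the limit-theorem machinery: rather than invoking Hwang's quasi-power framework, the paper does a partial-fraction decomposition of $G_k$ in $z$, writes $g_n(w)=\sum_i q_i(w)\alpha_i(w)^n$ explicitly, and proves a self-contained criterion (Theorem~\ref{thm_generalGaussian}) that such a sum yields a Gaussian with mean $n\,\alpha_1'(1)/\alpha_1(1)+O(1)$ and variance $n\,(x\alpha_1'/\alpha_1)'|_{x=1}+O(1)$. This buys an elementary moment-generating-function argument mimicking the classical CLT, at the cost of verifying by hand that the dominant root is simple and separated (Proposition~\ref{prop:mainres}) and that the variance constant is nonzero (Lemma~\ref{lem_variance_grow}). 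Your singular-expansion/Hwang route would reach the same constants with less ad~hoc work but a heavier black box; otherwise the two arguments are interchangeable, and your observation that the independence of $\mathcal{K}_n+\mathcal{L}_n$ and $\mathcal{K}_n-\mathcal{L}_n$ reduces to ${\rm Var}(\mathcal{K}_n)={\rm Var}(\mathcal{L}_n)+O(1)$ via the $a\leftrightarrow b$ symmetry is exactly how the paper closes \S\ref{sec:subsecgaussianity}.
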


Their proof used generating functions to show that the moments of the distribution of summands converge to those of a Gaussian. The main idea is to show that the conditions which imply Gaussianity for positive-term decompositions also hold for the Fibonacci far-difference representation. One of our main goals in this paper is to extend these arguments further to the more general signed decompositions. In the course of doing so, we find a simpler way to handle the resulting algebra.

We then consider an interesting question about the summands in a decomposition, namely \emph{how are the lengths of index gaps between adjacent summands distributed in a given integer decomposition?} Equivalently, how long must we wait after choosing a term from a sequence before the next term is chosen in a particular decomposition? In \cite{BBGILMT}, the authors solve this question for the Fibonacci far-difference representation, as well as other PLRS, provided that all the coefficients are positive. Note this restriction therefore excludes the $k$-Skipponaccis for $k \ge 2$.

\begin{thm}[\cite{BBGILMT}]\label{thm:skipgaps}
As $n \to \infty$, the probability $P(j)$ of a gap of length $j$ in a far-difference decomposition of integers in $(R_4(n-1), R_4(n)]$ converges to geometric decay for $j \ge 4$, with decay constant equal to the golden mean $\varphi$. Specifically, if $a_1 = \varphi / \sqrt{5}$ (which is the coefficient of the largest root of the recurrence polynomial in Binet's Formula\footnote{As our Fibonacci sequence is shifted by one index from the standard representation, for us Binet's Formula reads $F_n = \frac{\varphi}{\sqrt{5}} \varphi^n - \frac{1-\varphi}{\sqrt{5}} (1-\varphi)^n$. For any linear recurrence whose characteristic polynomial is of degree $d$ with $d$ distinct roots, the $n$\textsuperscript{{\rm th}} term is a linear combination of the $n$\textsuperscript{{\rm th}} powers of the $d$ roots; we always let $a_1$ denote the coefficient of the largest root.} expansion for $F_n$), then $P(j) = 0$ if $j \le 2$ and
\begin{equation} \label{thm:FibonacciGaps}
P(j) \ = \
\begin{cases}
\frac{10a_1\varphi}{\varphi^4-1}\varphi^{-j} & \text{ if } j \ge 4 \\
\frac{5a_1}{\varphi^2(\varphi^4-1)} & \text{ if } j = 3.
\end{cases}
\end{equation}
\end{thm}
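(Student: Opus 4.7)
The plan is to count, over all integers $m \in (R_4(n-1), R_4(n)]$, the total number of length-$j$ gaps appearing in their Fibonacci far-difference decompositions, divide by the total number of gaps, and take $n \to \infty$. That $P(j) = 0$ for $j \leq 2$ is immediate from Theorem~\ref{thm:alpert}: the constraints (same-sign $\geq 4$, opposite-sign $\geq 3$) preclude any shorter gap.

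For $j \geq 3$, fix a candidate pair $(i, i+j)$ of consecutive summand indices with $i + j \leq n$. Every $m \in (R_4(n-1), R_4(n)]$ has $F_n$ as its largest positive summand, and a decomposition of $m$ containing the gap $(i, i+j)$ factors uniquely into a \emph{top piece} covering indices in $\{i+j, \ldots, n\}$ (including the forced $F_n$ and a signed $F_{i+j}$) and a \emph{bottom piece} covering indices in $\{1, \ldots, i\}$ (including a signed $F_i$). Both pieces independently obey Alpert's constraints. For each choice of signs $\varepsilon_1, \varepsilon_2 \in \{\pm 1\}$ at the two endpoints of the gap, let $T_\ell^{\varepsilon_1}$ count the valid top pieces of ``height'' $\ell = n - (i+j)$ ending in sign $\varepsilon_1$, and $B_i^{\varepsilon_2}$ count the valid bottom pieces rooted at sign $\varepsilon_2$ at index $i$. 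These counts satisfy Fibonacci-type linear recurrences, so Binet's formula gives
\begin{equation}
T_\ell^{\varepsilon_1} \ = \ \alpha_{\varepsilon_1}\,\varphi^\ell + O((1-\varphi)^\ell), \qquad B_i^{\varepsilon_2} \ = \ \beta_{\varepsilon_2}\,\varphi^i + O((1-\varphi)^i).
\end{equation}

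The number of decompositions with a gap at $(i, i+j)$ is $\sum_{\varepsilon_1,\varepsilon_2} T_{n-i-j}^{\varepsilon_1} B_i^{\varepsilon_2}$, summed only over sign pairs compatible with $j$ (all four pairs when $j \geq 4$; only the two opposite-sign pairs when $j = 3$). Each product is $\sim \alpha_{\varepsilon_1}\beta_{\varepsilon_2}\,\varphi^{n-j}$, independent of $i$, so summing over the $O(n)$ admissible values of $i$ yields a total length-$j$ gap count of $(n + O(1))C_j\,\varphi^{n-j}$, where $C_j$ depends only on which sign pairs contribute. The denominator (total gap count) is
\begin{equation}
(\mathbb{E}[\mathcal{K}_n + \mathcal{L}_n] - 1)\cdot \bigl|(R_4(n-1),\, R_4(n)]\bigr| \ \sim \ \frac{n}{5}\cdot c\,\varphi^n,
\end{equation}
using Theorem~\ref{thm:MW1 result} (whence $\mathbb{E}[\mathcal{K}_n + \mathcal{L}_n] = n/5 + O(1)$) and $R_4(n) - R_4(n-1) \sim c\,\varphi^n$ for an explicit $c$ obtained from Binet and the geometric series defining $R_4$. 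Taking $n \to \infty$ leaves a $\varphi^{-j}$ geometric decay with a prefactor that differs between $j=3$ and $j \geq 4$ by a factor of $\varphi/2$ (symmetry makes each sign pair contribute equally, and $j=3$ excludes the two same-sign pairs out of four), matching $P(3)/P(j{=}4) = \varphi/2$ as read off from the stated formulas.

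The main obstacle is the exact computation of the constants $\alpha_{\varepsilon_1}, \beta_{\varepsilon_2}, c$ needed to recover $\frac{10 a_1\varphi}{\varphi^4-1}$ and $\frac{5a_1}{\varphi^2(\varphi^4-1)}$. The factor $\varphi^4 - 1$ should emerge from the geometric summation implicit in the $T$-recurrence, reflecting the step of $4$ in the same-sign constraint and in $R_4$. Carefully handling boundary behavior at both endpoints of the decomposition — the forced $F_n$ at the top and the constraint that the bottom piece genuinely terminates — is the technical crux, as these boundary corrections must be shown to be absorbed in the $O(1)$ term and not to contaminate the leading asymptotic.
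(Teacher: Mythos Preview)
Your overall architecture matches the paper's proof of the general Theorem~\ref{thm:gapresult} (of which this statement is the $k=1$ case): fix a gap location $(i,i+j)$, factor each decomposition into an independent piece below index $i$ and a piece above index $i+j$, use Binet's formula for the asymptotics of each piece, sum over $i$, and divide by the total number of gaps (obtained from the mean summand count times the interval length).

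Where you diverge is in the bookkeeping. You carry four sign-labelled counts $T_\ell^{\varepsilon_1}, B_i^{\varepsilon_2}$ and leave their leading constants $\alpha_{\varepsilon_1},\beta_{\varepsilon_2}$ uncomputed, conceding this as ``the main obstacle.'' The paper sidesteps signs entirely via the counting identities of Lemma~\ref{lem:counting}: the shift identity and inclusion--exclusion combine to give the closed form $X_{i,i+j}(n)=R_{i-k-1}\cdot R_{n-i-j-2k-1}$ (for $k{+}2\le j<2k{+}2$) or $R_{i-k-1}\cdot R_{n-i-j-k-1}$ (for $j\ge 2k{+}2$), from which the constants drop out immediately using $R_k(n)\sim a_1\lambda_1^n/(1-\lambda_1^{-2k-2})$. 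This is the missing idea in your sketch, and it is exactly what resolves the obstacle you identify; with it, no separate sign-by-sign Binet analysis is needed at all.

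One caution: your heuristic that ``symmetry makes each sign pair contribute equally,'' used to extract the factor $1/2$ at $j=3$, is asserted rather than proved. In the paper's framework this factor does not arise from a symmetry argument but from the shift in the second $R$-index between the two regimes, so if you pursue your sign-tracking route you would need to justify that claim directly (e.g., by exhibiting an explicit bijection between the relevant bottom pieces).
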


\subsection{New Results}

In this paper, we study far-difference relations related to certain generalizations of the Fibonacci numbers, called the $k$-Skipponacci numbers.

\begin{defi}[$k$-Skipponacci Numbers] For any non-negative integer $k$, the $k$-Skipponaccis are the sequence of integers defined by $S^{(k)}_{n+1} = S^{(k)}_n + S^{(k)}_{n-k}$ for some $k \in \mathbb{N}$. We index the $k$-Skipponaccis such that the first few terms are $S^{(k)}_1 = 1$, $S^{(k)}_2 = 2$, ..., $S^{(k)}_{k+1} = k+1$, and $S^{(k)}_n = 0$ for all $n \le 0$. \end{defi}

Some common $k$-Skipponacci sequences are the 0-Skipponaccis (which are powers of 2, and lead to binary decompositions) and the 1-Skipponaccis (the Fibonaccis). Our first result is that a generalized Zeckendorf theorem holds for far-difference representations arising from the $k$-Skipponaccis.

\begin{thm}\label{Thm:Far-Diff}
Every $x \in \mathbb{Z}$ has a unique far-difference representation for the $k$-Skipponaccis such that every two terms of the same sign are at least $2k+2$ apart in index and every two terms of opposite sign are at least $k+2$ apart in index.
\end{thm}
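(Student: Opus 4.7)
My plan is to adapt Alpert's induction argument (used for the Fibonacci case, Theorem \ref{thm:alpert}) to the $k$-Skipponaccis. The central tool will be the identity
\[
S^{(k)}_n \ = \ R_k(n-1) + R_k(n-k-2) + 1,
\]
valid for all $n \ge 1$ with $R_k$ as defined in the abstract (and $R_k(m) = 0$ for $m \le 0$). I plan to prove it by induction on $n$: using $S^{(k)}_n = S^{(k)}_{n-1} + S^{(k)}_{n-k-1}$ together with $R_k(m) = S^{(k)}_m + R_k(m-2k-2)$, the identity for $n-1$ and $n-k-1$ collapses algebraically to the identity for $n$. An immediate consequence is that $R_k$ is strictly increasing, so the half-open intervals $(R_k(n-1), R_k(n)]$ partition $\mathbb{Z}_{>0}$.

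Granted the identity, existence follows by strong induction on $|x|$. For $x = 0$ take the empty sum, and for $x < 0$ negate the representation of $-x$. For $x > 0$ let $n$ be the unique index with $x \in (R_k(n-1), R_k(n)]$ and set $y := S^{(k)}_n - x$. Combining $R_k(n) = S^{(k)}_n + R_k(n-2k-2)$ with the identity yields
\[
-R_k(n-2k-2) \ \le \ y \ \le \ R_k(n-k-2),
\]
and in particular $|y| < x$, so $y$ already admits a valid representation by induction. I then verify the gap conditions between the new leading summand $+S^{(k)}_n$ and the top summand of $-y$'s representation: if $y > 0$ then the top of $-y$ is a negative summand of index at most $n-k-2$ (by applying the induction to the bound $y \le R_k(n-k-2)$), giving opposite-sign gap $\ge k+2$; if $y < 0$ then the top of $-y$ is a positive summand of index at most $n-2k-2$, giving same-sign gap $\ge 2k+2$. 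Adjoining $+S^{(k)}_n$ therefore yields a valid representation of $x$.

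Uniqueness is proved in parallel by showing that the highest-index summand of any valid representation of $x \in (R_k(n-1), R_k(n)]$ must be $+S^{(k)}_n$. A representation whose positive summands all have index $\le n-1$ has value at most $R_k(n-1)$, since the same-sign gap condition forces positive indices to be separated by $\ge 2k+2$ and the negative summands only decrease the sum. Conversely, a representation with a positive summand of index $m \ge n+1$ has value at least $S^{(k)}_m - R_k(m-k-2) = R_k(m-1) + 1 \ge R_k(n) + 1$, again by the identity. Both contradict $x \in (R_k(n-1), R_k(n)]$, forcing $+S^{(k)}_n$ as the leading term; subtracting it and invoking the inductive uniqueness for $-y$ completes the proof.

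The principal obstacle will be verifying that $R_k(m)$ really is the supremum of values attainable by a valid positive-summands-only far-difference sum with top index $\le m$; this is what licenses both inductive steps above. Once that (and the small-$n$ boundary cases of the identity) are handled using the initial conditions $S^{(k)}_1 = 1, \ldots, S^{(k)}_{k+1} = k+1$, the remainder of the proof is a direct generalization of Alpert's.
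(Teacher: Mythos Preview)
Your proposal is correct and follows essentially the same approach as the paper: the key identity $S^{(k)}_n = R_k(n-1) + R_k(n-k-2) + 1$ is exactly the paper's Lemma~\ref{Lem:R+R=S-1}, and your partition of $\mathbb{Z}_{>0}$ into the intervals $(R_k(n-1), R_k(n)]$ coincides (via that identity) with the paper's partition into $[S^{(k)}_n - R_k(n-k-2), R_k(n)]$. Your two cases, split on the sign of $y = S^{(k)}_n - x$, are precisely the paper's cases $x \le S^{(k)}_n$ versus $x \ge S^{(k)}_n$, and your uniqueness argument via forcing the leading term is the same as the paper's; if anything, you are slightly more explicit than the paper about checking the gap conditions when adjoining the new leading summand.
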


Before stating our results on Gaussianity, we first need to set some new notation, which generalizes the summation in \eqref{R4(n)}. \begin{equation} \label{Rn}
R_k(n) \ := \
\begin{cases}
\sum_{0 < n-b(2k+2) \le n} S^{(k)}_{n-b(2k+2)} \ = \  S^{(k)}_n + S^{(k)}_{n-2k-2} + S^{(k)}_{n-4k-4} +  \cdots & \text{ if } n > 0
\\
0 & \text{ otherwise,}
\end{cases}
\end{equation}

\begin{thm} \label{thm:Gaussianity[MW]} Fix a positive integer $k$. Let $\mathcal{K}_n$ and $\mathcal{L}_n$ be the corresponding random variables denoting the number of positive and the number of negative summands in the far-difference representation for integers in $(R_k(n-1),R_k(n)]$ from the $k$-Skipponaccis. As $n\to\infty$, expected values of $\mathcal{K}_n$ and $\mathcal{L}_n$ both grow linearly with $n$ and differ by a constant, as do their variances. The standardized joint density of $\mathcal{K}_n$ and $\mathcal{L}_n$ converges to a bivariate Gaussian with a computable correlation. More generally, for any non-negative numbers $a, b$ not both equal to $0$, the random variable $X_n=a\mathcal{K}_n+b\mathcal{L}_n$ converges to a normal distribution as $n\to\infty$.
\end{thm}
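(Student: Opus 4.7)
The plan is to extend the generating function approach of Miller-Wang \cite{MW1} to the $k$-Skipponacci setting. By Theorem \ref{Thm:Far-Diff}, each integer $m \in (R_k(n-1), R_k(n)]$ corresponds uniquely to a valid signed index pattern whose leading (largest-index) summand is $+S^{(k)}_n$, subject to consecutive same-sign indices differing by at least $2k+2$ and consecutive opposite-sign indices differing by at least $k+2$. Thus the question reduces to a bivariate enumeration of such gap-restricted sign patterns.

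Let $T_{n,p,q}$ count such decompositions with $p$ positive and $q$ negative summands, and form the trivariate generating function $F(x,y,z) = \sum_{n,p,q} T_{n,p,q}\, x^p y^q z^n$. I would split $F$ according to the sign and index of the second-largest summand, producing a coupled system in $F$ and a companion generating function (tracking decompositions whose leading summand is negative, related by $x \leftrightarrow y$ symmetry). Eliminating yields a rational expression whose denominator, at $x=y=1$, is a polynomial whose smallest positive root is the reciprocal of the largest root of the $k$-Skipponacci characteristic polynomial. From this rational form, I would read off $\mathbb{E}[\mathcal{K}_n]$ and $\mathbb{E}[\mathcal{L}_n]$ via $\partial_x F / F$ and $\partial_y F / F$ at $x=y=1$; the $x \leftrightarrow y$ symmetry of the gap rules forces the difference of the means (and of the variances) to be bounded in $n$, as claimed, while the leading term grows linearly. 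Linear-in-$n$ variance and covariance follow by taking further partial derivatives.

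For the joint Gaussian limit, I would show that the moment-generating function $\mathbb{E}\!\left[e^{s\mathcal{K}_n + t\mathcal{L}_n}\right]$, after centering by the means and rescaling by $\sqrt{n}$, converges pointwise to that of a bivariate normal. This follows from standard quasi-power machinery once it is verified that the dominant singularity $z_0(x,y)$ of $F(x,y,\cdot)$ is a simple pole that depends analytically on $(x,y)$ in a neighborhood of $(1,1)$; the limiting correlation is then the off-diagonal entry of the Hessian of $\log z_0$ at $(1,1)$ normalized by its diagonal entries. The final assertion, asymptotic normality of $a\mathcal{K}_n + b\mathcal{L}_n$ for non-negative $a,b$ not both zero, follows either as an immediate corollary of bivariate normality or by the same quasi-power argument applied in a single variable to $F(e^{as}, e^{bs}, z)$, where non-negativity of $a,b$ ensures the transformed arguments remain in a region where the analyticity of $z_0$ is guaranteed.

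The main obstacle is the bookkeeping in the functional equation: the two distinct gap constraints $2k+2$ and $k+2$ produce a coupled system in which the shift polynomials $z^{2k+2}$ and $z^{k+2}$ interact asymmetrically with the sign-tracking variables $x,y$, so a careful elimination is needed to express $F$ as a single rational function whose denominator can be related back to the $k$-Skipponacci recurrence. A secondary technical point is verifying non-degeneracy of the limiting covariance matrix, which amounts to showing that the Hessian of $\log z_0(x,y)$ at $(1,1)$ is positive definite; this should reduce to a structural property of the characteristic polynomial but must be checked so that the limiting bivariate Gaussian is genuinely two-dimensional and the correlation coefficient is well defined.
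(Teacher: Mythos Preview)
Your plan is essentially the paper's own approach: derive the bivariate generating function as an explicit rational function in $z$, identify its dominant (simple) root as an analytic function of the summand-tracking variables, exploit the $x\leftrightarrow y$ symmetry to get that means and variances differ by $O(1)$, and then deduce Gaussianity from the quasi-power structure. The only packaging difference is that the paper isolates the quasi-power step as its own self-contained criterion (Theorem~\ref{thm_generalGaussian}) and verifies non-degeneracy of the variance directly (Lemma~\ref{lem_variance_grow}), rather than citing the machinery as a black box; your identification of the Hessian non-degeneracy of $\log z_0$ as the key technical check matches exactly what the paper must prove there.
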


\noindent This theorem is an analogue to Theorem \ref{thm:MW1 result} of \cite{MW1} for the case of Fibonacci numbers. Their proof, which is stated in Section 6 of \cite{MW1}, relies heavily on Section 5 of the same paper where the authors proved Gaussianity for a large subset of sequences whose generating function satisfies some specific constraints. In this paper we state a sufficient condition for Gaussianity in the following theorem, which we prove in \S\ref{sec:gaussianity}. We show that it applies in our case, yielding a significantly simpler proof of Gaussianity than the one in \cite{MW1}.

\begin{thm}\label{thm_generalGaussian}
Let $\kappa$ be a fixed positive integer. For each $n$, let a discrete random variable $X_n$ in $I_n=\{0,1,\dots,n\}$ have
\be {\rm Prob}(X_n=j)\ = \
\begin{cases}
  \rho_{j;n}/\sum_{j=1}^n  \rho_{j;n} & \text{{\rm if} } j\in I_n \\
 0 &\text{{\rm otherwise}}
\end{cases}
  \ee
for some positive real numbers $ \rho_{1;n},\dots, \rho_{n;n}$. Let $g_n(x) := \sum_j  \rho_{j;n}x^j$ be the generating function of $X_n$. If $g_n$ has form $g_n(x)\ = \ \sum_{i=1}^\kappa q_i(x)\alpha_i^n(x)$ where
\begin{itemize}
\item[(i)] for each $i\in\{1,\dots,\kappa\}$, $q_i,\alpha_i:\mathbb{R}\to\mathbb{R}$ are three times differentiable functions  which do not depend on $n$;
\item[(ii)] there exists some small positive $\epsilon$ and some positive constant $\lambda<1$ such that for all $x\in I_\epsilon=[1-\epsilon,1+\epsilon]$, $|\alpha_1(x)|>1$ and $\frac{|\alpha_i(x)|}{|\alpha_1(x)|}<\lambda<1$ for all $i=2,\dots,\kappa$;
\item[(iii)] $\alpha_1'(1)\neq 0$ and $\frac{d}{dx}\left[\frac{x\alpha_1'(x)}{\alpha_1(x)}\right] \left|_{x=1}\neq 0\right.$;
\end{itemize}
then
\begin{itemize}
\item[(a)]  The mean $\mu_n$ and variance $\sigma_n^2$ of $X_n$ both grow linearly with $n$. Specifically,
\begin{equation}
\mu_n\ = \ A n+B+o(1)
\end{equation}
\begin{equation}
\sigma_n^2\ = \ C \cdot n+ D+o(1)
\end{equation}
where \begin{equation}A\ = \ \frac{\alpha_1'(1)}{\alpha_1(1)}, \ \ \ \  B\  = \ \frac{q_1'(1)}{q_1(1)}
\end{equation}
\begin{equation}
C\ = \ \left(\frac{x\alpha_1'(x)}{\alpha_1(x)}\right)\Bigg|_{x=1}\ = \ \frac{\alpha_1(1)[\alpha_1'(1)+\alpha_1''(1)]-\alpha_1'(1)^2}{\alpha_1(1)^2}
\end{equation}
\begin{equation}
D\ = \ \left(\frac{xq_1'(x)}{q_1(x)}\right)\Bigg|_{x=1} \ = \ \frac{q_1(1)[q_1'(1)+q_1''(1)]-q_1'(1)^2}{q_1(1)^2}.
\end{equation}
\item[(b)] As $n\to\infty$, $X_n$ converges in distribution to a normal distribution.
\end{itemize}
\end{thm}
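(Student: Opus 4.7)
The plan is to use the moment generating function approach. Define
\begin{equation*}
M_n(t) \ := \ \mathbb{E}\!\left[e^{t(X_n-\mu_n)/\sigma_n}\right] \ = \ e^{-t\mu_n/\sigma_n}\cdot\frac{g_n(e^{t/\sigma_n})}{g_n(1)},
\end{equation*}
and show $M_n(t)\to e^{t^2/2}$ pointwise on a neighborhood of $0$; since $X_n$ has finite support this MGF always exists, and by Curtiss's theorem this convergence implies $(X_n-\mu_n)/\sigma_n$ converges in distribution to a standard normal. The calculation of $M_n$ will be driven entirely by the closed form $g_n(x)=\sum_{i=1}^\kappa q_i(x)\alpha_i(x)^n$.

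For part (a), condition (ii) at $x=1$ lets me factor out the dominant term and write
\begin{equation*}
g_n(x) \ = \ q_1(x)\alpha_1(x)^n\bigl(1+E_n(x)\bigr),\qquad E_n(x) \ := \ \sum_{i\ge 2}\frac{q_i(x)}{q_1(x)}\left(\frac{\alpha_i(x)}{\alpha_1(x)}\right)^{\!n}.
\end{equation*}
After possibly shrinking $\epsilon$ so that $q_1$ is bounded away from $0$ and $|\alpha_i(x)/\alpha_1(x)|\le\lambda'<1$ uniformly on $I_\epsilon$, one checks that $E_n$ together with its first three derivatives is $O(\lambda'^{\,n})$ times a polynomial in $n$, hence decays exponentially in $n$. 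I then apply the identities
\begin{equation*}
\mu_n \ = \ \frac{g_n'(1)}{g_n(1)} \ = \ (\log g_n)'(1), \qquad \sigma_n^2 \ = \ \left(\frac{xg_n'(x)}{g_n(x)}\right)'\!\bigg|_{x=1},
\end{equation*}
and substitute $\log g_n(x)=n\log\alpha_1(x)+\log q_1(x)+\log(1+E_n(x))$. Routine differentiation yields the claimed expressions $\mu_n=An+B+O(\lambda'^{\,n})$ and $\sigma_n^2=Cn+D+O(\lambda'^{\,n})$.

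For part (b), condition (iii) forces $C>0$ (it is the limit of $\sigma_n^2/n\ge 0$ and is nonzero by hypothesis), so $\sigma_n\to\infty$, and for fixed $t$ the point $x_n:=e^{t/\sigma_n}$ eventually lies in $I_\epsilon$. Taking logarithms,
\begin{equation*}
\log M_n(t) \ = \ -\frac{t\mu_n}{\sigma_n} + n\log\frac{\alpha_1(x_n)}{\alpha_1(1)} + \log\frac{q_1(x_n)}{q_1(1)} + \log\frac{1+E_n(x_n)}{1+E_n(1)}.
\end{equation*}
Setting $h(u):=\log\alpha_1(e^u)$, one computes $h'(0)=A$ and $h''(0)=C$, and $h'''$ is bounded near $0$; a third-order Taylor expansion gives
\begin{equation*}
n\,h(t/\sigma_n) - n\log\alpha_1(1) \ = \ \frac{nAt}{\sigma_n} + \frac{nCt^2}{2\sigma_n^2} + O\!\left(\frac{n}{\sigma_n^3}\right).
\end{equation*}
The first summand cancels the leading $An\cdot t/\sigma_n$ part of $-t\mu_n/\sigma_n$, leaving a $-Bt/\sigma_n\to 0$ remainder; the second tends to $t^2/2$ since $nC/\sigma_n^2\to 1$; the third is $O(n^{-1/2})$; and the $q_1$-ratio and the $E_n$-ratio both tend to $1$ in logarithm since $x_n\to 1$ and $E_n(x_n)\to 0$. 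Hence $\log M_n(t)\to t^2/2$.

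The main technical obstacle is the uniform exponential control on $E_n$ and its derivatives on a neighborhood of $1$, since derivatives of $(\alpha_i/\alpha_1)^n$ pull down polynomial-in-$n$ factors via the chain rule. This is the step where condition (ii) must be used not just at $x=1$ but on a whole compact interval, and where the $C^3$ hypothesis in (i) is needed to differentiate three times. Once these uniform bounds are established, both the mean/variance asymptotics of part (a) and the Taylor estimate of part (b) follow by elementary calculus with no further surprises.
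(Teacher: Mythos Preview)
Your proposal is correct and follows essentially the same approach as the paper: both factor out the dominant term $q_1(x)\alpha_1(x)^n$, control the remainder exponentially via condition (ii), and prove convergence in distribution by showing $\log M_{Y_n}(t)\to t^2/2$ through a Taylor expansion around $x=1$. Your use of $h(u)=\log\alpha_1(e^u)$ (so that $h'(0)=A$, $h''(0)=C$) and of $\mu_n=(\log g_n)'(1)$ is a slight streamlining of the paper's direct computations, but the mathematical content is the same.
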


Next we generalize previous work on gaps between summands. This result makes use of a standard result, the Generalized Binet's Formula; see \cite{BBGILMT} for a proof for a large family of recurrence relations which includes the $k$-Skipponaccis. We restate the result here for the specific case of the $k$-Skipponaccis.

\begin{lem} \label{Binet-Skipponacci}
Let $\lambda_1,\dots,\lambda_k$ be the roots of the characteristic polynomial for the $k$-Skipponaccis. Then $\lambda_1 > |\lambda_2| \ge \cdots \ge |\lambda_k|$, $\lambda_1 > 1$, and there exists a constant $a_1$ such that
\begin{equation}
S^{(k)}_n \ = \  a_1\lambda_1^n + O(n^{\max(0,k-2)}\lambda_2^n).
\end{equation}
\end{lem}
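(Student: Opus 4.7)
The plan is to analyze the characteristic polynomial $p(x) = x^{k+1} - x^k - 1$ of the recurrence $S^{(k)}_{n+1} = S^{(k)}_n + S^{(k)}_{n-k}$, establish the claimed structure of its roots, and then invoke the standard closed-form representation of solutions to a linear recurrence with distinct characteristic roots.

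First I would establish the existence and uniqueness of a real root $\lambda_1 > 1$. Since $p(1) = -1 < 0$ and $p(x) \to +\infty$ as $x \to \infty$, the intermediate value theorem produces a root in $(1,\infty)$. The derivative $p'(x) = x^{k-1}\bigl((k+1)x - k\bigr)$ is strictly positive for $x > k/(k+1)$, so $p$ is strictly increasing on that interval and the positive real root is unique. The same formula for $p'$ shows that a multiple root would have to lie in $\{0,\, k/(k+1)\}$; a direct check gives $p(0) = -1$ and $p\bigl(k/(k+1)\bigr) = -\bigl(k/(k+1)\bigr)^k/(k+1) - 1 \ne 0$, so $p$ has no repeated roots and hence has $k+1$ distinct roots.

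Next I would prove that $|\mu| < \lambda_1$ for every other root $\mu$. From $\mu^{k+1} = \mu^k + 1$ and the triangle inequality, $|\mu|^{k+1} \le |\mu|^k + 1$, i.e.\ $|\mu|^k(|\mu|-1) \le 1 = \lambda_1^k(\lambda_1-1)$. Since $x \mapsto x^k(x-1)$ is strictly increasing on $[1,\infty)$ and nonpositive on $[0,1]$, this forces $|\mu| \le \lambda_1$. If $|\mu| = \lambda_1$, the equality case of the triangle inequality forces $\mu^k$ to be a nonnegative real, so $\mu^k = \lambda_1^k$; substituting back gives $\mu^{k+1} = \lambda_1^{k+1}$, and dividing yields $\mu = \lambda_1$, contradicting $\mu \ne \lambda_1$. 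Thus $\lambda_1$ is the unique root of maximal modulus.

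With the roots distinct and $\lambda_1$ strictly dominant, the standard theory of linear recurrences gives $S^{(k)}_n = \sum_i a_i \lambda_i^n$, where the coefficients are determined from the initial conditions by a Vandermonde system. Separating off the dominant term yields $S^{(k)}_n = a_1 \lambda_1^n + O(\lambda_2^n)$, which is at least as strong as the claimed $O(n^{\max(0,k-2)}\lambda_2^n)$; the polynomial factor in the general statement reflects the broader framework of \cite{BBGILMT}, which must accommodate recurrences whose non-dominant roots may have higher multiplicity. Positivity of $a_1$ follows since $S^{(k)}_n$ is a positive, unbounded sequence.

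The main obstacle is the strict inequality $|\mu| < \lambda_1$: the triangle inequality quickly delivers the weak bound $|\mu| \le \lambda_1$, but ruling out equality requires the careful identification of when the triangle inequality is tight and then using the characteristic equation to force any such $\mu$ to coincide with $\lambda_1$.
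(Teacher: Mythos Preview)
Your argument is correct. The paper itself does not prove this lemma: it simply cites \cite{BBGILMT} for the Generalized Binet Formula in a broader setting and restates the special case for the $k$-Skipponaccis. You instead give a direct, self-contained proof for the specific polynomial $p(x)=x^{k+1}-x^k-1$, which is more than the paper provides. Your triangle-inequality argument for strict dominance of $\lambda_1$ is clean, and because you establish that $p$ has \emph{simple} roots, your error term $O(|\lambda_2|^n)$ is in fact sharper than the stated $O(n^{\max(0,k-2)}|\lambda_2|^n)$; as you note, the polynomial factor is only there to cover the general framework in \cite{BBGILMT}, where non-dominant roots may be repeated.

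Two small remarks. First, the lemma as printed lists the roots as $\lambda_1,\dots,\lambda_k$, but the characteristic polynomial has degree $k+1$; you have implicitly (and correctly) worked with $k+1$ roots. Second, your closing sentence that $a_1>0$ ``since $S^{(k)}_n$ is a positive, unbounded sequence'' is a step beyond what the lemma asserts and, as written, is a little thin: positivity and unboundedness give $a_1=\lim S^{(k)}_n/\lambda_1^n\ge 0$, but ruling out $a_1=0$ still requires comparing the growth of $S^{(k)}_n$ against $|\lambda_2|^n$ (for this particular recurrence one can check that the non-dominant roots lie in the closed unit disk, so $a_1=0$ would force $S^{(k)}_n=O(1)$, a contradiction). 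Since the lemma does not claim $a_1>0$, this does not affect the validity of your proof.
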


\begin{thm} \label{thm:gapresult} Consider the $k$-Skipponacci numbers $\{S^{(k)}_n\}$. For each $n$, let $P_n(j)$ be the probability that the size of a gap between adjacent terms in the far-difference decomposition of a number $m \in (R_k(n-1),R_k(n)]$ is $j$. Let $\lambda_1$ denote the largest root of the recurrence relation for the $k$-Skipponacci numbers, and let $a_1$ be the coefficient of $\lambda_1$ in the Generalized Binet's formula expansion for $S^{(k)}_n$. As $n\to\infty$, $P_n(j)$ converges to geometric decay for $j \ge 2k+2$, with computable limiting values for other $j$. Specifically, we have $ \lim_{n\to\infty}P_n(j) = P(j) = 0$ for $j \le k+1$, and
\begin{equation}
P(j) \ = \  \begin{cases}
\frac{a_1\lambda_1^{-3k-2}}{A_{1,1} \left(1-\lambda_1^{-2k-2}\right)^2 (\lambda_1-1)}\lambda_1^{-j} & \text{if }\; k+2 \le j < 2k+2 \\
\frac{a_1\lambda_1^{-2k-2}}{A_{1,1} \left(1-\lambda_1^{-2k-2}\right)^2 (\lambda_1-1)}\lambda_1^{-j} & \text{if }\; j \ge 2k+2.
\end{cases}
\end{equation}
where $A_{1,1}$ is a constant defined in \eqref{E[K+L]}.
\end{thm}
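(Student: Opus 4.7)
The plan is to adapt the combinatorial strategy from \cite{BBGILMT} (which established the Fibonacci case, Theorem \ref{thm:skipgaps}) to the $k$-Skipponacci far-difference representation, using Lemma \ref{Binet-Skipponacci} for the asymptotics and Theorem \ref{thm:Gaussianity[MW]} for the normalization. First, let $N_n(j)$ denote the total number of gaps of length exactly $j$ across the unique far-difference decompositions of all $m\in(R_k(n-1),R_k(n)]$, and let $T_n$ denote the total number of gaps in this interval. Since a decomposition with $s$ summands contributes $s-1$ gaps, $T_n = \bigl(\mathbb{E}[\mathcal{K}_n + \mathcal{L}_n]-1\bigr)|I_n|$, where $|I_n| = R_k(n) - R_k(n-1)$. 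By Theorem \ref{thm:Gaussianity[MW]} and the definition of $A_{1,1}$ in \eqref{E[K+L]}, $T_n \sim A_{1,1}\, n\, |I_n|$ as $n\to\infty$, so $P(j) = \lim_{n\to\infty} N_n(j) / (A_{1,1}\, n\, |I_n|)$.

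To compute $N_n(j)$, I would fix an index $i$ at which a gap of length $j$ begins, and split each such decomposition into a ``bottom'' piece supported on indices $<i$, a signed summand at $i$, a forbidden stretch of length $j-1$, a signed summand at $i+j$, and a ``top'' piece supported on indices $>i+j$. Writing $(\epsilon_1,\epsilon_2)$ for the signs at $i$ and $i+j$, this pair is admissible if $j\geq 2k+2$ (all four choices allowed) or $k+2\leq j<2k+2$ (only the two opposite-sign choices allowed), by Theorem \ref{Thm:Far-Diff}. Moreover, the summand of the top (resp.\ bottom) piece closest to $i+j$ (resp.\ $i$) must lie at index distance at least $k+2$ or $2k+2$, depending on whether its sign matches $\epsilon_2$ (resp.\ $\epsilon_1$). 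This yields a clean product decomposition $N_n(j) = \sum_{i}\sum_{(\epsilon_1,\epsilon_2)\text{ admissible}} B_i^{(\epsilon_1)} \cdot T_{n,i+j}^{(\epsilon_2)}$.

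The asymptotics now follow from Lemma \ref{Binet-Skipponacci}. Each $B_i^{(\epsilon)}$ counts valid tails confined to indices at distance at least $g\in\{k+2,2k+2\}$ below $i$, which should satisfy a recurrence mimicking the $k$-Skipponacci one and hence grow like a constant times $\lambda_1^{i-g}$; an analogous asymptotic $\lambda_1^{n-(i+j)-g'}$ holds for $T_{n,i+j}^{(\epsilon)}$, now normalized so that the full integer lies in $(R_k(n-1),R_k(n)]$. Summing over $i$ produces a geometric series in $\lambda_1^{-1}$ that contributes the factor $(\lambda_1-1)^{-1}$, while the $R_k$-style partition structure from \eqref{Rn} (inherent to both the normalization and to the tail counts) contributes the factor $(1-\lambda_1^{-2k-2})^{-2}$ in the denominator and a factor of $a_1$ in the numerator. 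The explicit $\lambda_1^{-j}$ decay emerges directly from the $j$-dependence of the position $i+j$, and tracking whether $g+g'$ takes the smaller mixed-sign value $2(k+2)$ or the larger value involving $2k+2$, combined with the multiplicity of admissible sign pairs, gives the exponents $-3k-2$ and $-2k-2$ in the two cases of the theorem.

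The main obstacle is the bookkeeping: one must verify that each geometric series resums to exactly the stated rational function of $\lambda_1$, that the lower-order errors from Lemma \ref{Binet-Skipponacci} (which are $O(n^{\max(0,k-2)}\lambda_2^n)$ per summand) are genuinely negligible in the ratio, and that boundary contributions from $i$ near $1$ or $n$ wash out. A cleaner organizational device is to encode the tails via generating functions $\sum_\ell c_\ell^{(\epsilon)}x^\ell$ whose dominant singularity at $x=1/\lambda_1$ automatically produces all the required factors by partial fractions, so that the entire argument reduces to extracting the coefficient of $x^n$ from a product of four such generating functions (bottom tail, top tail, and the two signed summands separated by the gap). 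Dividing by $A_{1,1}\, n\, |I_n|$ and sending $n\to\infty$ then yields the two geometric decay formulas.
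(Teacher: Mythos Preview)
Your approach is essentially the paper's: split each decomposition at a gap of length $j$ starting at index $i$, count the bottom and top pieces independently, apply Binet asymptotics, sum over $i$, and normalize by $T_n\sim A_{1,1}n|I_n|$. The paper avoids your explicit four-way sign casework by invoking Lemma \ref{lem:counting}: the identities $N(-S^{(k)}_1,+S^{(k)}_j)+N(+S^{(k)}_1,+S^{(k)}_j)=N(+S^{(k)}_j)-N(+S^{(k)}_{j-1})$ and $N(+S^{(k)}_i)=R_k(i)-R_k(i-1)$ collapse everything at once to the closed forms $X_{i,i+j}(n)=R_{i-k-1}\,R_{n-i-j-2k-1}$ for $k+2\le j<2k+2$ and $X_{i,i+j}(n)=R_{i-k-1}\,R_{n-i-j-k-1}$ for $j\ge 2k+2$, from which the asymptotics are immediate via \eqref{R_nBinet}. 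Your $B_i^{(\epsilon_1)}T_{n,i+j}^{(\epsilon_2)}$ bookkeeping would eventually reproduce these same products, but the paper's route is shorter and explains the two exponents directly as the index shifts $-(k+1)-(2k+1)=-3k-2$ versus $-(k+1)-(k+1)=-2k-2$, rather than through a ``$g+g'$ plus multiplicity'' heuristic (which, as you phrased it, has the two cases reversed).

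There is one genuine slip. You claim that ``summing over $i$ produces a geometric series in $\lambda_1^{-1}$ that contributes the factor $(\lambda_1-1)^{-1}$.'' It does not: the bottom piece grows like $\lambda_1^{i}$ and the top piece like $\lambda_1^{n-i-j}$, so their product is asymptotically \emph{independent of $i$}, and the sum over $i$ contributes a factor of $n$ (which is precisely what cancels the $n$ in $A_{1,1}n$ downstairs). The factor $(\lambda_1-1)$ in the denominator instead comes from $|I_n|=R_k(n)-R_k(n-1)\sim a_1\lambda_1^{n-1}(\lambda_1-1)/(1-\lambda_1^{-2k-2})$. With that correction your outline goes through and matches the paper's proof.
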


Our final results explore a complete characterization of sequences that exhibit far-difference representations. That is, we study integer decompositions on a sequence of terms in which same sign summands are $s$ apart in index and opposite sign summands are $d$ apart in index. We call such representations \emph{(s,d) far-difference representations}, which we formally define below.

\begin{defn}[$(s,d)$ far-difference representation]\label{def:sdfardiffrep} A sequence $\{a_n\}$ has an \emph{$(s,d)$ far-difference representation} if every integer can be written uniquely as sum of terms $\pm a_n$ in which every two terms of the same sign are at least $s$ apart in index and every two terms of opposite sign are at least $d$ apart in index.
\end{defn}

Thus the Fibonaccis lead to a $(4,3)$ far-difference representation. More generally, the $k$-Skipponaccis lead to a $(2k+2,k+2)$ one. We can consider the reverse problem; if we are given a pair of positive integers $(s,d)$, is there a sequence such that each number has a unique $(s,d)$ far-difference representation? The following theorem shows that the answer is yes, and gives a construction for the sequence.

\begin{thm}\label{farDiffRec} Fix positive integers $s$ and $d$, and define a sequence $\{a_n\}_{n=1}^{\infty}$ by
\begin{itemize}
\item[i.] For $n=1,2,\dots,\min(s,d)$, let $a_n=n$.
\item[ii.] For $\min(s,d)< n\leq \max(s,d)$, let
\be a_n \ = \  \left\{
  \begin{array}{l l}
    a_{n-1}+a_{n-s} & \quad \text{{\rm if}\ $s<d$}\\
    a_{n-1}+a_{n-d}+1 & \quad \text{{\rm if}\ $d\leq s$.}
  \end{array} \right.\ee

\item[iii.] For $n> \max(s,d)$, let $a_n=a_{n-1}+a_{n-s}+a_{n-d}$.
\end{itemize}
Then the sequence $\{a_n\}$ has an unique $(s,d)$ far-difference representation.
\end{thm}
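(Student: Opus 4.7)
The plan is to adapt Alpert's inductive partition argument (from Theorem~\ref{thm:alpert}) to the general $(s,d)$ setting. Define the maximal positive sums
\[
R(n) \ :=\ \sum_{i \geq 0,\ n - is > 0} a_{n-is} \ =\ a_n + a_{n-s} + a_{n-2s} + \cdots,
\]
with $R(n) = 0$ for $n \leq 0$. The entire proof hinges on the identity
\begin{equation}\label{eq:keyid}
a_n \ =\ R(n-1) + R(n-d) + 1 \qquad \text{for all } n \geq 1,
\end{equation}
which I would establish first by strong induction on $n$. For $n > \max(s,d)$, group $R(n-1) + R(n-d)$ into pairs $a_{n-1-is} + a_{n-d-is}$ and apply the full recurrence $a_{n-is} = a_{n-is-1} + a_{n-is-s} + a_{n-is-d}$ to rewrite each pair as $a_{n-is} - a_{n-(i+1)s}$. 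The resulting series telescopes to $a_n$ minus a boundary contribution, and the missing ``$+1$'' is supplied exactly by the boundary: either by the transitional rule $a_n = a_{n-1} + a_{n-d} + 1$ that applies in the range $\min(s,d) < n \leq \max(s,d)$ when $d \leq s$, or by the initial condition $a_n = n$ for the smallest indices. Handling this bookkeeping, with sub-cases depending on the sign of $d-s$ and on the residue class of $n$ modulo $s$, is the main obstacle; it is routine but requires careful case analysis.

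Once \eqref{eq:keyid} is in hand, the intervals $(R(n-1), R(n)]$ tile $\mathbb{Z}^+$ (since $R$ is strictly increasing, unbounded, with $R(0)=0$), and existence follows by strong induction on $n$: every $m \in (R(n-1), R(n)]$ admits a valid $(s,d)$ representation whose top term is $+a_n$. The base case $n \leq \min(s,d)$ is trivial, since $(n-1,n]$ consists of the single element $a_n = n$. For the inductive step, consider $m - a_n$. If $m \geq a_n$, then $0 \leq m - a_n \leq R(n) - a_n = R(n-s)$, so by induction $m - a_n$ has a representation with top index $n' \leq n-s$, and prepending $+a_n$ preserves the same-sign gap $\geq s$. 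If $m < a_n$, then by \eqref{eq:keyid} we have $0 < a_n - m \leq a_n - R(n-1) - 1 = R(n-d)$, so $a_n - m$ has a representation with top index $n' \leq n-d$; negating it and prepending $+a_n$ yields an $(s,d)$ representation of $m$ with opposite-sign gap $\geq d$. Sign reversal handles negative $m$, and $m=0$ is the empty representation.

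For uniqueness, I would show that in any valid $(s,d)$ representation of $m > 0$ with top term $+a_n$, one necessarily has $m \in (R(n-1), R(n)]$. The upper bound $m \leq R(n)$ is immediate from the definition of $R(n)$ as the largest positive sum with leading term $a_n$ respecting the same-sign gap. The lower bound comes from minimizing the negative part: the most negative contribution compatible with a leading $+a_n$ is $-R(n-d)$ (using $-a_{n-d}, -a_{n-d-s}, \ldots$), giving $m \geq a_n - R(n-d) = R(n-1) + 1$ by \eqref{eq:keyid}. Since the intervals $(R(n-1), R(n)]$ are pairwise disjoint, the leading index $n$ is forced by $m$, and strong induction on $n$ (applied to the remainder $\pm(m - a_n)$, which lies in a strictly lower interval by the bounds just proved) yields uniqueness of the full representation.
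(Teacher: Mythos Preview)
Your proposal is correct and follows essentially the same route as the paper: establish the identity $a_n = R(n-1) + R(n-d) + 1$, use it to partition $\mathbb{Z}^+$ into intervals indexed by the leading term, then run strong induction for existence and uniqueness. The one notable difference is in how you prove the key identity \eqref{eq:keyid}: your proposed telescoping-and-case-analysis argument works but is needlessly laborious. The paper's inductive step is a one-liner: apply the hypothesis at $n-s$ to get $a_{n-s} = R(n-s-1) + R(n-s-d) + 1 = (R(n-1) - a_{n-1}) + (R(n-d) - a_{n-d}) + 1$, and rearrange using $a_n = a_{n-1} + a_{n-s} + a_{n-d}$. This completely sidesteps the residue-class bookkeeping you flagged as ``the main obstacle.''
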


In particular, as the Fibonaccis give rise to a $(4,3)$ far-difference representation, we should have $F_n = F_{n-1} + F_{n-4} + F_{n-3}$. We see this is true by repeatedly applying the standard Fibonacci recurrence: \bea F_n  \ = \  F_{n-1} + F_{n-2} \ = \ F_{n-1} + \left(F_{n-3} + F_{n-4}\right) \ = \ F_{n-1} + F_{n-4} + F_{n-3}. \eea

To prove our results we generalize the techniques from \cite{Al, BBGILMT, MW1} to our families. In \S\ref{sec:fardiffrepskip} we prove that for any $k$-Skipponacci recurrence relation, a unique far-difference representation exists for all positive integers. In \S\ref{sec:gaussianity} we prove that the number of summands in any far-difference representation approaches a Gaussian, and then we study the distribution of gaps between summands in \S\ref{sec:distrgaps}. We end in \S\ref{sec:genfardiffseq} by exploring generalized $(s,d)$ far-difference representations.

\section{Far-difference representation of $k$-Skipponaccis}\label{sec:fardiffrepskip}

Recall the $k$-Skipponaccis satisfy the recurrence $S^{(k)}_{n+1} = S^{(k)}_n + S^{(k)}_{n-k}$ with $S^{(k)}_i = i$ for $1 \le i \le k+1$. Some common $k$-Skipponacci sequences are the 0-Skipponaccis (the binary sequence) and the 1-Skipponaccis (the Fibonaccis). We prove that every integer has a unique far-difference representation arising from the $k$-Skipponaccis. The proof is similar to Alpert's proof for the Fibonacci numbers.

We break the analysis into integers in intervals $(R_k(n-1), R_k(n)]$, with $R_k(n)$ as in \eqref{Rn}. We need the following fact.

\begin{lem} \label{Lem:R+R=S-1} Let $\{S^{(k)}_n\}$ be the $k$-Skipponacci sequence. Then
\begin{equation} \label{lemma1}
S^{(k)}_{n} - R_k(n-k-2) - R_k(n-1)=1.
\end{equation}
\end{lem}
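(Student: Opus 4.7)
The plan is to prove the identity by strong induction on $n\ge 1$, exploiting the fact that one step of the Skipponacci recurrence for $S^{(k)}_n$ is exactly mirrored by one step of the natural recurrence for $R_k(n-1)$.

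For the base cases $1\le n\le k+1$, everything is accessible from the initial data. Here $S^{(k)}_n = n$, while $R_k(n-k-2)=0$ because $n-k-2\le -1$, and $R_k(n-1)$ consists of the single summand $S^{(k)}_{n-1}=n-1$ when $n\ge 2$ (no further index $n-1-(2k+2)$ is positive), with $R_k(0)=0$ when $n=1$. In every case the three quantities combine to give $1$.

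For the inductive step with $n\ge k+2$, I will apply two recurrences simultaneously. The Skipponacci recursion gives $S^{(k)}_n = S^{(k)}_{n-1}+S^{(k)}_{n-k-1}$. From the definition \eqref{Rn} of $R_k$, a direct unpacking of the first summand yields $R_k(m)=S^{(k)}_m+R_k(m-2k-2)$ for $m>0$, where $R_k$ vanishes on non-positive inputs; applied to $m=n-1$ this gives $R_k(n-1)=S^{(k)}_{n-1}+R_k(n-2k-3)$. Substituting both identities into the left-hand side, the two copies of $S^{(k)}_{n-1}$ cancel and leave
\[
S^{(k)}_n - R_k(n-k-2) - R_k(n-1) \;=\; S^{(k)}_{n-k-1} - R_k\bigl((n-k-1)-1\bigr) - R_k\bigl((n-k-1)-k-2\bigr),
\]
which is precisely the statement of the lemma with $n$ replaced by $n' = n-k-1$. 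Since $1\le n' < n$, the inductive hypothesis supplies the value $1$.

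The only real subtlety is bookkeeping: one must verify that the argument still goes through when intermediate indices slip into the regime $m\le 0$, where the $R_k$-recursion holds trivially under the convention $R_k(m)=0$. Beyond that, I expect no obstacle, since the essential content of the lemma is just the observation that a single step of the Skipponacci recurrence on $S^{(k)}_n$ is matched exactly by a single step of the $R_k$-recursion on $R_k(n-1)$, producing the same identity shifted down by $k+1$ indices and terminating in the initial-condition regime.
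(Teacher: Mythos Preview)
Your proof is correct and follows essentially the same route as the paper's: both arguments expand $R_k(n-1)=S^{(k)}_{n-1}+R_k(n-2k-3)$, apply the Skipponacci recurrence $S^{(k)}_n=S^{(k)}_{n-1}+S^{(k)}_{n-k-1}$, and thereby reduce the identity at $n$ to the identity at $n-k-1$. The only cosmetic difference is that the paper verifies base cases up through $n=2k+2$, whereas you (correctly) observe that $1\le n\le k+1$ already suffices since the inductive step drops the index by exactly $k+1$.
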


The proof of follows by a simple induction argument, which for completeness we give in Appendix \ref{sec:proofsfromsecfardiffreplemmas}.

\begin{proof}[Proof of Theorem \ref{Thm:Far-Diff}] It suffices to consider the decomposition of positive integers, as negative integers follow similarly. Note the number 0 is represented by the decomposition with no summands.

We claim that the positive integers are the disjoint union over all closed intervals of the form $[S^{(k)}_n - R_k(n-k-2), R_k(n)]$. To prove this, it suffices to show that $S^{(k)}_{n} - R_k(n-k-2) = R_k(n-1) + 1$ which follows immediately from Lemma \ref{Lem:R+R=S-1}.


Assume a positive integer $x$ has a $k-$Skipponacci far-differenced representation in which $S^{(k)}_n$ is the leading term, (i.e., the term of largest index). It is easy to see that because of our rule, the largest number can be decomposed with the leading term $S^{(k))}_n$ is $ S^{(k)}_n+S^{(k)}_{n-2k-2}+S^{(k)}_{n-4k-4}+\cdots=R_k(n)$ and the smallest one is $S^{(k)}_n-S^{(k)}_{n-k-2}-S^{(k)}_{n-3k-4}-\cdots=S^{(k)}_n-R_k(n-k-2)$, hence $S^{(k)}_n-R_k(n-k-2)\leq x\leq R_k(n)$. Since we proved that $\{[S^{(k)}_n - R_k(n-k-2), R_k(n)]\}_{n=1}^\infty$ is a disjoint cover of all positive integers, for any integer $x\in \mathbb{Z}^+$, there is a unique $n$ such that $S^{(k)}_n - R_k(n-k-2) \le x \le S^{(k)}_n$. Further, if $x$ has a $k$-Skipponacci far-difference representation, then $S^{(k)}_n$ must be its leading term.

Therefore if a decomposition of such an $x$ exists it must begin with $S^{(k)}_n$. We are left with proving a decomposition exists and that it is unique. We proceed by induction.

For the base case, let $n=0$. Notice that the only value for $x$ on the interval $0 \le x \le R_k(0)$ is $x=0$, and the $k$-Skipponacci far-difference representation of $x$ is empty for any $k$. Assume that every integer $x$ satisfying $0 \le x \le R_k(n-1)$ has a unique far-difference representation. We now consider $x$ such that $R_k(n-1) < x \le R_k(n)$. From our partition of the integers, $x$ satisfies $S^{(k)}_n - R_k(n-k-2) \le x \le R_k(n)$. There are two cases.

\begin{itemize}

\item[(1)] $S^{(k)}_n - R_k(n-k-2) \le x \le S^{(k)}_n$. \\
Note that for this case, it is equivalent to say $0 \le S^{(k)}_n - x \le R_k(n-k-2)$. It then follows from the inductive step that $S^{(k)}_n - x$ has a unique $k$-Skipponacci far-difference representation with $S^{(k)}_{n-k-2}$ as the upper bound for the main term.

\item[(2)] $S^{(k)}_n \le x \le R_k(n)$. \\
For this case, we can once again subtract $S^{(k)}_n$ from both sides of the inequality to get $0 \le x-S^{(k)}_n \le R_k(n-2k-2)$. It then follows from the inductive step that $x-S^{(k)}_n$ has a unique far-difference representation with main term at most $S^{(k)}_{n-2k-2}$.

\end{itemize}

In either case, we can generate a unique $k$-Skipponacci far-difference representation for $x$ by adding $S^{(k)}_n$ to the representation for $x - S^{(k)}_n$ (which, from the definition of $R_k(m)$, in both cases has the index of its largest summand sufficiently far away from $n$ to qualify as a far-difference representation. \end{proof}

\section{Gaussian Behavior}\label{sec:gaussianity}

In this section we follow method in Section 6 of \cite{MW1} to prove Gaussianity for the number of summands. We first find the generating function for the problem, and then analyze that function to complete the proof.

\subsection{Derivation of the Generating Function}\label{sec:derivgenfns}

Let $p_{n,m,\ell}$ be the number of integers in $(R_k(n)$, $R_k(n+1)]$ with exactly $m$ positive summands and exactly $\ell$ negative summands in their far-difference decomposition via the $k$-Skipponaccis (as $k$ is fixed, for notational convenience we suppress $k$ in the definition of $p_{n,m,\ell}$). When $n \le 0$ we let $p_{n,m,\ell}$ be 0.  We first derive a recurrence relation for $p_{n,m,\ell}$ by a combinatorial approach, from which the generating function immediately follows.
\begin{lem} Notation as above, for $n > 1$ we have
\begin{equation} \label{prec1}
p_{n,m,\ell}\ = \ p_{n-1,m,\ell}+ p_{n-(2k+2),m-1,\ell} + p_{n-(k+2),\ell,m-1}.
\end{equation}
\end{lem}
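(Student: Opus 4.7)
The plan is to derive the recurrence from a bijective/telescoping argument, working directly from the uniqueness of the far-difference representation (Theorem~\ref{Thm:Far-Diff}) and the identity in Lemma~\ref{Lem:R+R=S-1}.

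First I will shift the interval $(R_k(n),R_k(n+1)]$ so that it is centred on the leading summand. Applying Lemma~\ref{Lem:R+R=S-1} at index $n+1$ gives $S^{(k)}_{n+1}-R_k(n-k-1)-R_k(n)=1$, so the substitution $y:=x-S^{(k)}_{n+1}$ is a bijection
\[
(R_k(n),R_k(n+1)]\;\longleftrightarrow\;\bigl[-R_k(n-k-1),\,R_k(n-2k-1)\bigr]\cap\Z.
\]
By Theorem~\ref{Thm:Far-Diff} the $x$ in this range all have $+S^{(k)}_{n+1}$ as the leading term of their decomposition, and what remains after removing it is exactly the (unique) far-difference decomposition of $y$.

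Next I will partition by the sign of $y$ and count summands. If $y=0$ then $x=S^{(k)}_{n+1}$, contributing $[m{=}1,\ell{=}0]$ (an Iverson bracket). If $y>0$ then $y$ lies in exactly one block $(R_k(j),R_k(j+1)]$ with $0\le j\le n-2k-2$, its leading summand $+S^{(k)}_{j+1}$ has index at most $n-2k-1=(n+1)-(2k+2)$ so the required same-sign gap to $S^{(k)}_{n+1}$ holds automatically, and the remaining decomposition of $y$ contributes $m-1$ positive and $\ell$ negative summands. If $y<0$ then $-y$ lies in some block $(R_k(j),R_k(j+1)]$ with $0\le j\le n-k-2$, its leading index is at most $n-k-1=(n+1)-(k+2)$ so the opposite-sign gap is correct, and the bijection $y\leftrightarrow -y$ swaps the roles of positive and negative summands, so $y$ contributes $\ell$ positive and $m-1$ negative summands. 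Putting these three cases together yields the closed-form identity
\[
p_{n,m,\ell}\ =\ [m{=}1,\ell{=}0]\ +\ \sum_{j\le n-2k-2}p_{j,m-1,\ell}\ +\ \sum_{j\le n-k-2}p_{j,\ell,m-1}.
\]

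Finally I will telescope. Writing the same identity with $n$ replaced by $n-1$, the Iverson bracket and all but the top term of each cumulative sum cancel upon subtraction, giving
\[
p_{n,m,\ell}-p_{n-1,m,\ell}\ =\ p_{n-2k-2,m-1,\ell}+p_{n-k-2,\ell,m-1},
\]
which rearranges to~\eqref{prec1}. The step I expect to require the most care is case $y<0$: one must check that the opposite-sign constraint between $+S^{(k)}_{n+1}$ and the negative leading term of $y$, the same-sign constraints internal to the decomposition of $-y$, and the cross-constraints between any subsequent terms, all line up so that the resulting decomposition of $x$ is legal; the gap bounds $j\le n-k-1$ and $j\le n-2k-1$ are exactly the tight values coming from Theorem~\ref{Thm:Far-Diff}, so everything fits with no overlap and no omission, and the sign-swap bijection on $y<0$ correctly exchanges the indices $m\leftrightarrow\ell$ in the third term.
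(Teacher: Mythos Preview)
Your proof is correct and follows essentially the same route as the paper: remove the leading summand $S^{(k)}_{n+1}$, split the remainder $y$ by sign to obtain a cumulative-sum identity, then subtract the same identity at $n-1$ to telescope down to~\eqref{prec1}. Your use of the Iverson bracket $[m{=}1,\ell{=}0]$ to absorb the $y=0$ case is slightly cleaner than the paper's explicit casework on $(m-1,\ell)=(0,0)$, and you track the interval indexing from the definition $(R_k(n),R_k(n+1)]$ more carefully than the paper's own proof (which silently shifts to $(R_k(n-1),R_k(n)]$), but the underlying argument is the same.
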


\begin{proof} First note that $p_{n,m,\ell} = 0$ if $m \le 0$ or $\ell < 0 $. In \S\ref{sec:fardiffrepskip} we partitioned the integers into the intervals $[R_k(n-1)+1,R_k(n)]$, and noted that if an integer $x$ in this interval has a far-difference representation, then it must have leading term $S^{(k)}_n$, and thus $x - S^{(k)}_n \in [R_k(n-1)+1-S^{(k)}_n,R_k(n)-S^{(k)}_n]$. From Lemma \ref{Lem:R+R=S-1} we have
\bea\label{S-R_n-1-R_n-k-2=1}
S^{(k)}_n - R_k(n-1) - R_k(n-k-2)
\ = \ 1,
\eea which implies $R_k(n-1) + 1 - S^{(k)}_n = -R_k(n-k-2)$. Thus $p_{n,m,\ell}$ is the number of far-difference representations for integers in $[-R_k(n-k-2), R_k(n-2k-2)]$ with $m-1$ positive summands and $\ell$ negative summands (as we subtracted away the main term $S^{(k)}_n$).

Let $n > 2k+2$. There are two possibilities.\\

\noindent \texttt{Case 1: $(k-1,\ell) = (0,0)$.}

\noindent Since $S^{(k)}_n - R_k(n-1) - R_k(n-k-2) = 1$ by \eqref{S-R_n-1-R_n-k-2=1}, we know that $S^{(k)}_{n-1} < R_k(n-1) < S^{(k)}_n$ for all $n > 1$. This means there must be exactly one $k$-Skipponacci number on the interval $[R_k(n-1)+1,R_k(n)]$ for all $n > 1$. It follows that $p_{n,1,0} = p_{n-1,1,0} = 1$, and the recurrence in \eqref{prec1} follows since $p_{n-k-2,0,0}$ and $p_{n-2k-2,0,0}$ are both 0 for all $n > 2k+2$. \\

\noindent \texttt{Case 2: $(k-1,\ell)$ is not $(0,0)$.}

\noindent Let $N(I,m,\ell)$ be the number of far-difference representations of integers in the interval $I$ with $m$ positive summands and $\ell$ negative summands. Thus
\begin{align} \label{pnml_sum1}
p_{n,m,\ell}
\;\ = \ &\; N\left[ (0,R_k(n-2k-2)],m-1,\ell \right] + N\left[ (-R_k(n-k-2),0],m-1,\ell \right] \nonumber \\
\;\ = \ &\; N\left[ (0,R_k(n-2k-2)],m-1,\ell \right] + N\left[ (0,R_k(n-k-2)],\ell,m-1 \right] \nonumber \\
\;\ = \ &\; \sum_{i=1}^{n-2k-2} p_{i,m-1,\ell} + \sum_{i=1}^{n-k-2} p_{i,\ell,m-1}.
\end{align}

Since $n > 1$, we can replace $n$ with $n-1$ in \eqref{pnml_sum1} to get
\begin{equation} \label{pnml_sum2}
p_{n-1,m,\ell}
\;\ = \ \; \sum_{i=1}^{n-2k-3} p_{i,m-1,\ell} + \sum_{i=1}^{n-k-3} p_{i,\ell,m-1}.
\end{equation}
Subtracting \eqref{pnml_sum2} from\eqref{pnml_sum1} gives us the desired expression for $p_{n,m,\ell}$. \end{proof}

The generating function $G_k(x,y,z)$ for the far-difference representations by $k$-Skipponacci numbers is defined by \be G_k(x,y,z)\ =\ \sum p_{n,m,\ell}x^my^{\ell}z^n. \ee

\begin{thm} \label{Thm:G_k(x,y,z)} Notation as above, we have
\begin{equation} \label{genfn}
G_k(x,y,z)
\;\ = \ \; \frac{xz-xz^2+xyz^{k+3}-xyz^{2k+3}}{1-2z+z^2-(x+y)z^{2k+2}+(x+y)z^{2k+3}-xyz^{2k+4}+xyz^{4k+4}}.
\end{equation}
\end{thm}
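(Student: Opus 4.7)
The plan is to convert the recurrence \eqref{prec1} into a functional equation for $G_k(x,y,z)$, exploit the natural $x \leftrightarrow y$ symmetry of the setup to obtain a companion equation, and then solve the resulting $2 \times 2$ linear system for $G_k(x,y,z)$.

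First I would multiply both sides of \eqref{prec1} by $x^m y^\ell z^n$ and sum over $n \ge 2$, $m \ge 0$, $\ell \ge 0$. Using the conventions that $p_{n,m,\ell} = 0$ whenever $n \le 0$, $m < 0$, or $\ell < 0$, the only $n \le 1$ contribution missing from the left side is the initial datum $p_{1,1,0} = 1$ (corresponding to $1 = S^{(k)}_1$), so the left-hand side evaluates to $G_k(x,y,z) - xz$. Next I would reindex each of the three terms on the right: a shift $n \mapsto n+1$ gives $\sum p_{n-1,m,\ell}\,x^m y^\ell z^n = z\,G_k(x,y,z)$; combined shifts $n \mapsto n+(2k+2)$ and $m \mapsto m+1$ give $\sum p_{n-(2k+2),m-1,\ell}\,x^m y^\ell z^n = xz^{2k+2}\,G_k(x,y,z)$; and the swap of the two middle indices in $p_{n-(k+2),\ell,m-1}$ forces an exchange of the formal variables, yielding $xz^{k+2}\,G_k(y,x,z)$. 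Combining these gives the functional equation
\begin{equation*}
G_k(x,y,z)\bigl(1 - z - xz^{2k+2}\bigr) \;=\; xz \,+\, xz^{k+2}\,G_k(y,x,z).
\end{equation*}

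Because the recurrence is invariant under interchanging positive and negative summands, swapping $x$ and $y$ above yields the companion identity
\begin{equation*}
G_k(y,x,z)\bigl(1 - z - yz^{2k+2}\bigr) \;=\; yz \,+\, yz^{k+2}\,G_k(x,y,z).
\end{equation*}
I would then view these two equations as a linear system in the unknowns $G_k(x,y,z)$ and $G_k(y,x,z)$. Multiplying the first equation by $1 - z - yz^{2k+2}$ and substituting the second eliminates $G_k(y,x,z)$ and produces
\begin{equation*}
G_k(x,y,z)\Bigl[(1 - z - xz^{2k+2})(1 - z - yz^{2k+2}) - xyz^{2k+4}\Bigr] \;=\; xz\bigl(1 - z - yz^{2k+2}\bigr) \,+\, xyz^{k+3},
\end{equation*}
which expands directly to the numerator $xz - xz^2 + xyz^{k+3} - xyz^{2k+3}$ and the denominator $1 - 2z + z^2 - (x+y)z^{2k+2} + (x+y)z^{2k+3} - xyz^{2k+4} + xyz^{4k+4}$ claimed in \eqref{genfn}.

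The step that deserves the most care, and is the main potential obstacle, is verifying that $xz$ is the \emph{only} boundary correction needed when summing the recurrence. Concretely, one must check that \eqref{prec1} holds, with the stated sign conventions, for every $n \ge 2$ and every $(m,\ell)$, including the small ranges $2 \le n \le k+2$ and $k+2 < n \le 2k+2$ where some of the shifted $p$-values on the right vanish automatically; the fact that the $(m,\ell)=(1,0)$ case was separated out in the derivation of \eqref{prec1} is precisely what keeps the combinatorial sum consistent with the $p_{n,m,\ell}=0$ conventions. Once this bookkeeping is confirmed, the remaining algebra is mechanical.
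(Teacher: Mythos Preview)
Your proof is correct and takes a genuinely different route from the paper's. The paper first eliminates the index swap in \eqref{prec1} by substituting shifted copies of the recurrence back into itself, arriving at an eight-term recurrence \eqref{pnmlrec} in which every term has the form $p_{n-n_0,\,m-m_0,\,\ell-\ell_0}$; it then multiplies by the characteristic polynomial $P(x,y,z)$ and must track four exceptional triples $(1,1,0)$, $(2,1,0)$, $(k+3,1,1)$, $(2k+3,1,1)$ where the manipulated recurrence fails. You instead accept the swapped term, observe that it contributes $xz^{k+2}G_k(y,x,z)$, and then obtain the companion equation by the formal substitution $x\leftrightarrow y$; solving the resulting $2\times 2$ linear system reproduces both the numerator and denominator of \eqref{genfn} with only the single boundary term $xz$ to account for. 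This is cleaner than the paper's substitution chain and makes the structure of the denominator (a product $(1-z-xz^{2k+2})(1-z-yz^{2k+2})-xyz^{2k+4}$) visible, something the paper's approach obscures.

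One small remark on wording: your phrase ``because the recurrence is invariant under interchanging positive and negative summands'' is not quite the right justification, since \eqref{prec1} is not literally symmetric in $m$ and $\ell$. The companion equation holds simply because applying the ring automorphism $x\leftrightarrow y$ of $\mathbb{Z}[[x,y,z]]$ to any identity yields another identity, and this swap interchanges $G_k(x,y,z)$ with $G_k(y,x,z)$. No combinatorial symmetry is required.
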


\begin{proof} Note that the equality in \eqref{prec1} holds for all triples $(n,m,\ell)$ except for the case where $n=1$, $m=1$, and $\ell=0$ under the assumption that $p_{n,m,\ell}=0$ whenever $n\leq 0$. To prove the claimed formula for the generating function in \eqref{genfn}, however, we require a recurrence relation in which each term is of the form $p_{n-n_0,m-m_0,\ell-\ell_0}$. This can be achieved with some simple substitutions. Replacing $(n,m,\ell)$ in \eqref{prec1} with $(n-k-2,\ell,m-1)$ gives
\begin{equation} \label{prec2}
p_{n-k-2,\ell,m-1}\ = \ p_{n-(k+3),\ell,m-1}+ p_{n-(3k+4),\ell-1,m-1} + p_{n-(2k+4),m-1,\ell-1},
\end{equation} which holds for all triples except $(k+3,1,1)$. Rearranging the terms of \eqref{prec1}, we get
\begin{equation} \label{prec3}
p_{n-(k+2),\ell,m-1} \ = \  p_{n,m,\ell} - p_{n-1,m,\ell} - p_{n-(2k+2),m-1,\ell}.
\end{equation}

We replace $(n,m,\ell)$ in \eqref{prec3} with $(n-1,m,\ell)$ and $(n-2k-2,m,\ell-1)$ which yields
\begin{equation} \label{prec4}
p_{n-(k+3),l,m-1} \ = \  p_{n-1,m,l} - p_{n-2,m,l} - p_{n-(2k+3),m-1,l},
\end{equation} which only fails for the triple $(2,1,0)$, and
\begin{equation} \label{prec5}
p_{n-(3k+4),l-1,m-1} \ = \  p_{n-(2k+2),m,l-1} - p_{n-(2k+3),m,l-1} - p_{n-(4k+4),m-1,l-1},
\end{equation} which only fails for the triple $(2k+3,1,1)$. We substitute equations \eqref{prec3}, \eqref{prec4} and \eqref{prec5} into \eqref{prec1} and obtain the following expression for $p_{n,m,\ell}$:
\begin{align} \label{pnmlrec}
p_{n,m,l}
\;\ = \ &\; 2p_{n-1,m,l} - p_{n-2,m,l} + p_{n-(2k+2),m-1,l}  + p_{n-(2k+2),m,l-1} \nonumber \\
\;&\; - p_{n-(2k+3),m-1,l} - p_{n-(2k+3),m,l-1} + p_{n-(2k+4),m-1,l-1} - p_{n-(4k+4),m-1,l-1}.
\end{align}

Using this recurrence relation, we prove that the generating function in \eqref{genfn} is correct. Consider the following characteristic polynomial for the recurrence in \eqref{prec5}:
\begin{equation} \label{Pxyz}
P(x,y,z)
\ = \  1 - 2z + z^2 -(x+y)z^{2k+2} + (x+y)z^{2k+3} - xyz^{2k+4} + xyz^{4k+4}.
\end{equation}
We take the product of this polynomial with the generating function to get
\begin{align} \label{GenRec}
P(x,y,z)G_k(x,y,z)
\;\ = \ &\; \left( 1 - 2z + z^2 -(x+y)z^{2k+2} + (x+y)z^{2k+3} - xyz^{2k+4}\right. \nonumber \\
\;&\; \left. + xyz^{4k+4}\right) \cdot \sum_{n \ge 1} p_{n,m,l}x^my^lz^n \nonumber \\
\;\ = \ &\; x^my^lz^n \cdot \sum_{n \ge 1} p_{n,m,l} - 2p_{n-1,m,l} + p_{n-2,m,l} - p_{n-(2k+2),m-1,l} \nonumber \\
\;&\; - p_{n-(2k+2),m,l-1} + p_{n-(2k+3),m-1,l} + p_{n-(2k+3),m,l-1} \nonumber \\
\;&\; - p_{n-(2k+4),m-1,l-1} + p_{n-(4k+4),m-1,l-1}.
\end{align}

Notice that the equality from \eqref{prec5} appears within the summation, and this quantity is zero whenever the equality holds. We have shown  that the only cases where a triple does not satisfy the equality is when $(n,m,\ell)$ is given by $(1,1,0)$, $(2,1,0)$, $(k+3,1,1)$ or $(2k+3,1,1)$. Since \eqref{pnmlrec} is a combination of \eqref{prec3}, \eqref{prec4}, \eqref{prec2} and \eqref{prec5}, where these triples fail, it follows that they will also not satisfy the equality in \eqref{pnmlrec}. Thus within the summation in \eqref{GenRec}  we are left with a non-zero coefficient for $x^my^{\ell}z^n$. We collect these terms and are left with the following:
\begin{equation}
P(x,y,z)G_k(x,y,z) \ = \  xz - xz^2 + xyz^{k+3} - xyz^{2k+3}.
\end{equation}
Rearranging these terms and substituting in our value for $P(x,y,z)$ gives us the desired equation for the generating function.
\end{proof}

Going forward, we often need the modified version of our generating function in which we factor out the term $(1-z)$ from both the numerator and the denominator:
\begin{align} \label{Genfn2}
G_k(x,y,z)
\;\ = \ &\; \frac{ xz + \frac{1-z^k}{1-z}xyz^{k+3}  }{1-z-(x+y)z^{2k+2} + \frac{1-z^{2k}}{1-z}\left(-xyz^{2k+4}\right) } \nonumber \\
\;\ = \ &\; \frac{xz + xy\sum_{j=k+3}^{2k+2}z^j}{1-z-(x + y)z^{2k+2}-xy\sum_{j=2k+4}^{4k+3}z^j}.
\end{align}
For some calculations, it is more convenient to use this form of the generating function because the terms of the denominator are of the same sign (excluding the constant term).

\subsection{Proof of Theorem \ref{thm:Gaussianity[MW]}}\label{sec:subsecgaussianity}

Now that we have the generating function, we turn to proving Gaussianity. As the calculation is long and technical, we quickly summarize the main idea. We find, for $\kappa = 4k+3$, that we can write the relevant generating function as a sum of $\kappa$ terms. Each term is a product, and there is no $n$-dependence in the product (the $n$ dependence surfaces by taking one of the terms in the product to the $n$\textsuperscript{th} power). We then mimic the proof of the Central Limit Theorem. Specifically, we show only the first of the $\kappa$ terms contributes in the limit. We then Taylor expand and use logarithms to understand its behavior. The reason everything works so smoothly is that we almost have a fixed term raised to the $n$\textsuperscript{th} power; if we had that, the Central Limit Theorem would follow immediately. All that remains is to do some book-keeping to see that the mean is of size $n$ and the standard deviation of size $\sqrt{n}$.\\

To prove Theorem \ref{thm:Gaussianity[MW]}, we first prove that for each non-negative $(a,b)\neq (0,0)$, $X_n=a\mathcal{K}_n+b\mathcal{L}_n$ converges to a normal distribution as $n$ approaches infinity.

Let $x=w^a$ and $y=w^b$, then the coefficient of $z^n$ in \eqref{genfn} is given by $\sum_{m,\ell} p_{n,m,\ell}x^my^{\ell}=\sum_{m,\ell} p_{n,m,\ell} w^{am+b\ell}$. Define
\begin{equation}
g_n(w) \ := \  \sum_{m>0,\ell\ge 0} p_{n,m,\ell}w^{am + b\ell}.
\end{equation}
Then $g_n(w)$ is the generating function of $X_n$ because for each $i\in\{1,\dots,n\}$,
\begin{equation}
P(X_n=i)\ = \ \sum_{am+b\ell =i}p_{n,m,\ell}.
\end{equation}
We want to prove $g_n(w)$ satisfies all the conditions stated in Theorem \ref{thm_generalGaussian}. The following proposition, which is proved in Appendix \ref{sec:propmainres}, is useful for that purpose.

\begin{prop} There exists $\epsilon \in (0,1)$ such that for any $w \in I_{\epsilon} = (1-\epsilon,1+\epsilon)$:\label{prop:mainres}

\begin{itemize}
\item[(a)] $A_w(z)$ has no multiple roots, where $A_w(z)$ is the denominator of \eqref{genfn}.

\item[(b)] There exists a single positive real root $e_1(w)$ such that $e_1(w) < 1$ and there exists some positive $\lambda<1$ such that $|e_1(w)|/|e_i(w)|<\lambda$ for all $i \ge 2$.

\item[(c)] Each root $e_i(w)$ is continuous, infinitely differentiable, and
\begin{equation} \label{eprime}
e_1'(w)\ = \ -\frac{(aw^{a-1}+bw^{b-1})e_1(w)^{2k+2}+(a+b)w^{a+b-1}\sum_{j=2k+4}^{4k+3}e_1(w)^j}{1+(w^a+w^b)(2k+2)e_1(w)^{2k+1}+w^{a+b}
\sum_{j=2k+4}^{4k+3}je_1(w)^{j-1}}.
\end{equation}
\end{itemize}
\end{prop}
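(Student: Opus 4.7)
My plan is to factor $A_w(z)$ explicitly at $w=1$, use that factorization to locate and separate all the roots, and then transport the conclusions to a neighborhood of $1$ by continuity and the implicit function theorem. The key starting identity, verified by direct expansion of \eqref{Pxyz}, is
\[
A_w(z) \;=\; (1-z-w^a z^{2k+2})(1-z-w^b z^{2k+2}) - w^{a+b}z^{2k+4}.
\]
At $w=1$ this is a difference of squares, yielding $A_1(z)=p(z)\,q(z)$ with $p(z)=1-z-z^{k+2}-z^{2k+2}$ and $q(z)=1-z+z^{k+2}-z^{2k+2}$. A short manipulation gives the further factorization $q(z)=(1-z^{k+1})(1-z+z^{k+1})=(1-z)(1+z+\cdots+z^k)(1-z+z^{k+1})$, so $A_1$ decomposes into four explicit factors.

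Next I would locate the roots of each factor at $w=1$. The substitution $t=1/z$ (and multiplication by $z^{2k+2}$) turns $p$ into $t^{2k+2}-t^{2k+1}-t^k-1=(t^{k+1}-t^k-1)(t^{k+1}+1)$, so the reciprocals of roots of $p$ split into the $k$-Skipponacci characteristic roots (with $\lambda_1>|\lambda_i|$ for $i\ge 2$ by Lemma \ref{Binet-Skipponacci}) and the $(k+1)$-th roots of $-1$, all of modulus $1$. Hence $p$ has a unique smallest-modulus root $1/\lambda_1\in(0,1)$, and every other root of $p$ has modulus at least $1$. The cyclotomic factor $1+z+\cdots+z^k$ has unit-modulus roots, and the factor $1-z+z^{k+1}$ has no root of modulus $\le 1/\lambda_1$: if $|z|=r<1$ and $1-z+z^{k+1}=0$, then $r^{k+1}=|z-1|\ge 1-r$, with equality forcing $z$ real in $(0,1)$ and hence $z^{k+1}=z-1<0$, a contradiction; so $r^{k+1}+r>1$, and since $r^{k+1}+r=1$ has the unique positive solution $r=1/\lambda_1$ (divide $\lambda_1^{k+1}=\lambda_1^k+1$ by $\lambda_1^{k+1}$), we conclude $r>1/\lambda_1$. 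A pairwise-coprimality and square-freeness check on the four factors (on each candidate common root reduce using $z=1$, $z^{k+1}=1$, or $z^{k+1}=z-1$) shows every root of $A_1$ is simple, and $e_1(1):=1/\lambda_1$ is the unique root of $A_1$ of modulus $<1$ with a strict modulus gap to every other root.

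With the base case in hand, I would extend to a neighborhood. Since the coefficients of $A_w(z)$ are polynomials in $w$, the discriminant $\Delta(w)$ of $A_w$ (with respect to $z$) is a polynomial in $w$ with $\Delta(1)\ne 0$; thus $\Delta(w)\ne 0$ on some open interval $I_\epsilon=(1-\epsilon,1+\epsilon)$, giving (a). On $I_\epsilon$ every root is simple, so the implicit function theorem applied to $A_w(e_i(w))=0$ produces smooth (in fact analytic) branches $e_i(w)$, proving the smoothness claim in (c). The strict inequalities $|e_1(1)|<|e_i(1)|$ persist on a (possibly smaller) neighborhood by continuity, and compactness of a closed subneighborhood yields a uniform $\lambda<1$ bounding the ratios $|e_1(w)|/|e_i(w)|$, giving (b).

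For the explicit formula \eqref{eprime} it is cleaner to work with $B_w(z):=A_w(z)/(1-z)$, the denominator from \eqref{Genfn2}, which shares the nontrivial root $e_1(w)$ and has the simpler form $B_w(z)=1-z-(w^a+w^b)z^{2k+2}-w^{a+b}\sum_{j=2k+4}^{4k+3}z^j$. Implicit differentiation of $B_w(e_1(w))=0$ gives $e_1'(w)=-\partial_w B_w(e_1)/\partial_z B_w(e_1)$, and substituting the term-by-term partial derivatives reproduces \eqref{eprime} after a sign rearrangement. The main obstacle is the sharpness in Step~2 concerning the factor $1-z+z^{k+1}$: the critical radius with $r^{k+1}+r=1$ coincides exactly with $1/\lambda_1$, so the triangle-inequality estimate is sharp and one must invoke its equality case to strictly rule out a coincident root; once this is settled, the remaining steps are routine applications of continuity and the implicit function theorem.
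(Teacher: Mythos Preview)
Your argument is correct and reaches the same factorization of $A_1(z)$ as the paper, but by a different and in several respects cleaner route. The paper factors $A_1(z)=(z^{2k+2}-1)(z^{k+1}+z-1)(z^{k+1}-z+1)$ by direct algebraic manipulation and then checks pairwise coprimality and square-freeness; your difference-of-squares identity $A_w(z)=(1-z-w^az^{2k+2})(1-z-w^bz^{2k+2})-w^{a+b}z^{2k+4}$ gives the same factorization at $w=1$ after regrouping, and is a nicer way to see it. For part~(b), the paper applies a single triangle-inequality argument to the reduced polynomial $\hat A(z)$ to show $e_1$ is the strictly smallest root in modulus; you instead analyze each factor separately, linking one factor directly to the $k$-Skipponacci characteristic polynomial via Lemma~\ref{Binet-Skipponacci}. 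This buys you a conceptual explanation of why $e_1(1)=1/\lambda_1$, at the cost of a short case analysis for the factor $1-z+z^{k+1}$, which you handle correctly through the equality case of the reverse triangle inequality. For part~(c), the paper carries out an explicit difference-quotient computation; your appeal to the implicit function theorem on $B_w(e_1(w))=0$ is the standard and shorter way to get both smoothness and the formula \eqref{eprime}.

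One small imprecision: the assertion that ``every other root of $p$ has modulus at least $1$'' is stronger than what you have actually shown. Lemma~\ref{Binet-Skipponacci} gives only $|\lambda_i|<\lambda_1$ for $i\ge 2$, hence $|1/\lambda_i|>1/\lambda_1$, which is all you need; whether each $|\lambda_i|\le 1$ (so $|1/\lambda_i|\ge 1$) is a separate fact you have not established. This does not affect the proof, since the weaker inequality already yields the required strict modulus gap.
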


In the next step, we use partial fraction decomposition of $G_k(x,y,z)$ (from Theorem \ref{Thm:G_k(x,y,z)}) to find a formula for $g_n(w)$. Let $A_w(z)$ be the denominator of $G_k$.  Making the substitution $(x,y) = (w^a,w^b)$, we have
\begin{align} \label{pfA_w(z)}
\frac{1}{A_w(z)}
\;\ = \ &\; \frac{1}{w^{a+b}} \sum_{i\ = \ 1}^{4k+3} \frac{1}{(z-e_i(w))\prod_{j \neq i}(e_j(w) - e_i(w))} \nonumber \\
\;\ = \ &\; \frac{1}{w^{a+b}}\sum_{i=1}^{4k+3} \frac{1}{(1-\frac{z}{e_i(w)})} \cdot \frac{1}{e_i(w)\prod_{j \neq i}(e_j(w) - e_i(w))}.
\end{align}

Using the fact that $\frac{1}{1-\frac{z}{e_i(w)}}$ represents a geometric series, we combine the numerator of our generating function with our expression for the denominator in \eqref{pfA_w(z)} to get
\begin{align}
g_n(w)
\;\ = \ &\; \sum_{i=1}^{4k+3} \frac{1}{w^b e_i^n(w)\prod_{j \neq i}(e_j(w) - e_i(w))} -\sum_{i=1}^{4k+3} \frac{1}{w^b e_i^{n-1}(w)\prod_{j \neq i}(e_j(w) - e_i(w))} \nonumber \\
\;&\; + \sum_{i=1}^{4k+3} \frac{1}{e_i^{n-k-2}(w)\prod_{j \neq i}(e_j(w) - e_i(w))} - \sum_{i=1}^{4k+3} \frac{1}{e_i^{n-2k-2}(w)\prod_{j \neq i}(e_j(w) - e_i(w))} \nonumber \\
\;\ = \ &\; \sum_{i=1}^{4k+3} \frac{w^{-b}(1 - e_i(w)) + e_i^{k+2}(w) - e_i^{2k+2}(w)}{e_i^n(w)\prod_{j \neq i}(e_j(w) - e_i(w))}.
\end{align}
Let $q_i(w)$ denote all terms of $g_n(w)$ that do not depend on $n$:
\begin{equation} \label{q(w)}
q_i(w) \ := \  \frac{w^{-b}(1 - e_i(w)) + e_i^{k+2}(w) - e_i^{2k+2}(w)}{\prod_{j \neq i}(e_j(w) - e_i(w))}.
\end{equation}
Setting $\alpha_i:\ = \ 1/e_i$, we can find $g_n(w) = \sum_{i=1}^{4k+3} q_i(w)\alpha_i^n$. We want to apply Theorem \ref{thm_generalGaussian} to $X_n$. All the notations are the same except $\kappa:=4k+3$.

Indeed, by part (c) of Proposition \ref{prop:mainres}, $e_i(w)$ are infinitely many times differentiable for any $i=1,\dots,4k+3$. Since $0$ is not a root of $A_w(z)$, for sufficiently small $\epsilon$, $e_i(w)\neq 0$ for all $w\in I_\epsilon$. Therefore $\alpha_i$ and $q_i$, as rational functions of $e_1,\dots,e_{4k+3}$, are also infinitely many times differentiable; in particular, they are three times differentiable, thus satisfy condition $(i)$ in Theorem \ref{thm_generalGaussian}. By part (b) of Proposition \ref{prop:mainres}, $|e_1(w)|<1$ and $|e_1(w)|/|e_i(w)|<\lambda<1$ for $i\geq 2$. This implies $|\alpha_1(w)|>1$ and $|\alpha_i(w)|/|\alpha_1(w)|<\lambda<1$ for $i\geq 2$, thus $g_n$ satisfies condition $(ii)$ in Theorem \ref{thm_generalGaussian}. The following lemma, whose proof is stated in Appendix \ref{sec:proof_lem_variance_grow}, verifies the last condition.

\begin{lem}\label{lem_variance_grow} Given conditions as above:

\begin{equation}\label{nonzero_mean}
\frac{\alpha_1'(1)}{\alpha_1(1)}\ = \ \frac{-e'_1(1)}{e_1(1)}\ \neq \ 0.
\end{equation}
 \begin{equation}\label{nonzero_variance}
\frac{d}{dw}\left[\frac{w\alpha_1'(w)}{\alpha_1(w)}\right] \Big|_{w=1}\ = \ -
\frac{d}{dw}\left[\frac{we_1'(w)}{e_1(w)}\right] \Big|_{w=1}\ \neq  \ 0.
\end{equation}
\end{lem}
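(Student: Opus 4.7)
My plan is first to reduce both identities to statements about $e_1$, and then verify non-vanishing using the explicit formula \eqref{eprime} for $e_1'(1)$ together with implicit differentiation of $A_w(e_1(w))\equiv 0$ for $e_1''(1)$. Since $\alpha_1(w) = 1/e_1(w)$, logarithmic differentiation yields $\alpha_1'(w)/\alpha_1(w) = -e_1'(w)/e_1(w)$, and multiplying by $w$ yields $w\alpha_1'(w)/\alpha_1(w) = -we_1'(w)/e_1(w)$, from which both stated equalities follow immediately; it remains to verify the strict inequalities.

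For \eqref{nonzero_mean}, I substitute $w=1$ directly into \eqref{eprime}. By Proposition \ref{prop:mainres}(b), $e_1(1)\in(0,1)$, so every power $e_1(1)^j$ is strictly positive. Since $a,b\ge 0$ with $a+b>0$, the expression inside the outer minus sign of \eqref{eprime} is a strictly positive real number, as is the denominator. Hence $e_1'(1)<0$, and dividing by $-e_1(1)<0$ gives $-e_1'(1)/e_1(1)>0$, in particular nonzero.

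For \eqref{nonzero_variance}, applying the quotient rule gives
\begin{equation*}
\frac{d}{dw}\left[\frac{we_1'(w)}{e_1(w)}\right]\bigg|_{w=1}\ =\ \frac{e_1(1)\bigl[e_1'(1)+e_1''(1)\bigr]-e_1'(1)^2}{e_1(1)^2},
\end{equation*}
so the task becomes showing the numerator is nonzero. I would access $e_1''(1)$ by differentiating $A_w(e_1(w))\equiv 0$ twice with respect to $w$, obtaining the linear relation
\begin{equation*}
A_{ww}+2A_{wz}\,e_1'(1)+A_{zz}(e_1'(1))^2+A_z\,e_1''(1)\ =\ 0
\end{equation*}
with all partials evaluated at $(1,e_1(1))$, and solving for $e_1''(1)$; the leading coefficient $A_z(1,e_1(1))$ is nonzero since $e_1(1)$ is a simple root of $A_1(z)$ by Proposition \ref{prop:mainres}(a).

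The main obstacle is verifying that the resulting closed-form expression for $e_1(1)(e_1'(1)+e_1''(1))-e_1'(1)^2$ does not vanish for any admissible $k\ge 1$ and pair $(a,b)$. My strategy is to substitute the explicit formulas for $e_1'(1)$ and $e_1''(1)$, clear denominators, and then reduce high-degree powers of $e_1(1)$ using the defining relation $A_1(e_1(1))=0$, i.e.\ $1-e_1(1)=2\,e_1(1)^{2k+2}+\sum_{j=2k+4}^{4k+3}e_1(1)^j$. Past the leading $1-z$ term, every coefficient of $A_w(z)$ (viewed as a polynomial in $z$ whose coefficients are polynomials in $w$) is strictly negative at $w=1$, and I expect these uniform signs to propagate into $A_{ww}$, $A_{wz}$, $A_{zz}$ so as to preclude algebraic cancellation in the numerator. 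As a sanity check, up to an overall sign this quantity equals the coefficient $C$ of $n$ in the variance $\sigma_n^2$ from Theorem \ref{thm_generalGaussian}, which ought to be strictly positive since $X_n$ takes on at least linearly many distinct values on $(R_k(n-1),R_k(n)]$.
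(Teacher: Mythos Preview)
Your treatment of \eqref{nonzero_mean} is essentially the paper's: substitute $w=1$ into \eqref{eprime}, note that $e_1(1)>0$ and $a+b>0$ make numerator and denominator strictly positive, and conclude $e_1'(1)\neq 0$.

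For \eqref{nonzero_variance}, your route diverges from the paper's, and as written it has a gap. You propose to compute $e_1''(1)$ by twice implicitly differentiating $A_w(e_1(w))=0$, substitute into $e_1(1)\bigl(e_1'(1)+e_1''(1)\bigr)-e_1'(1)^2$, and then argue non-vanishing by sign-tracking. But you never carry this out: the sentence ``I expect these uniform signs to propagate into $A_{ww},A_{wz},A_{zz}$'' is a hope, not a proof. In fact the second partials do \emph{not} all carry a single sign (for instance $A_{zz}$ at $(1,e_1(1))$ mixes positive and negative contributions), so a naive sign argument will not close. Your ``sanity check'' that $C>0$ because $X_n$ takes linearly many values is heuristic at best; it does not replace a computation.

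The paper avoids computing $e_1''$ altogether. It rewrites $h(w):=-we_1'(w)/e_1(w)$, using \eqref{eprime}, as an explicit quotient $h_1(w)/h_2(w)$ of elementary functions of $w$ and $e_1(w)$. Assuming $h'(1)=0$ for contradiction gives $h_1'(1)h_2(1)=h_1(1)h_2'(1)$, which (since $h_1(1)/h_2(1)=-e_1'(1)/e_1(1)$) reduces to the single scalar identity $h_1'(1)\,e_1(1)+h_2'(1)\,e_1'(1)=0$. Computing $h_1'(1)$ and $h_2'(1)$ directly and grouping terms, this expression becomes
\[
e_1^{2k+1}\Bigl[(a^2+b^2)e_1^2+2(a+b)(2k+2)e_1e_1'+2(2k+2)^2(e_1')^2\Bigr]+\sum_{j=2k+4}^{4k+3}e_1^{j-1}\bigl[(a+b)e_1+je_1'\bigr]^2,
\]
which after $a^2+b^2\ge\tfrac12(a+b)^2$ is bounded below by a nonnegative sum of squares. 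Equality to zero would force $(a+b)e_1+je_1'=0$ simultaneously for every $j\in\{2k+4,\dots,4k+3\}$, impossible since $e_1'\neq 0$ and $j$ ranges over more than one value. This completing-the-square device is the missing idea in your outline; it bypasses $e_1''$ entirely and replaces your unfinished sign analysis with an exact algebraic obstruction.
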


We can now apply Theorem \ref{thm_generalGaussian} to conclude that $X_n$ converges to a Gaussian as $n$ approaches infinity. Moreover, we have formulas for the mean and variance of $X_n=a\mathcal{K}_n+b\mathcal{L}_n$ for each $(a,b)$ non-negative and not both zero. We have
\begin{equation} \label{E[K+L]}
\mathbb{E}[a\mathcal{K}_n+b\mathcal{L}_n] \ = \  A_{a,b}n + B_{a,b} + o(1),\end{equation}
where $A_{a,b}=\alpha'_1(1)/\alpha_1(1)$ and $B_{a,b}=q_1'(1)/q_1(1)$, which depend only on our choice of $a$ and $b$. Further,
\begin{equation} \label{Var[K+L]}
{\rm Var}(a\mathcal{K}_n + b\mathcal{L}_n) \ = \  C_{a,b}n + D_{a,b} + o(1),
\end{equation} where $C_{a,b}\ = \ \left(\frac{w\alpha_1'(w)}{\alpha_1(w)}\right)'\Big|_{w=1}$ and
$D_{a,b}\ = \ \left(\frac{wq_1'(w)}{q_1(w)}\right)'\Big|_{w=1}$,
which depend only on $a$ and $b$. By lemma \ref{lem_variance_grow}, $A_{a,b}$ and $C_{a,b}$ are non-zero, thus the mean and variance of $X_n$ always grows linearly with $n$.

As proved above, $X_n=a\mathcal{K}_n+b\mathcal{L}_n$ converges to a Gaussian distribution as $n\to\infty$. Let $(a,b)=(1,0)$ and $(0,1)$ we get $\mathcal{K}_n$ and $\mathcal{L}_n$ individually converge to a Gaussian. By \eqref{E[K+L]}, their means both grows linearly with $n$.
\begin{equation}
\mathbb{E}[\mathcal{K}_n]=A_{1,0}n+B_{1,0}+o(1)
\end{equation}
\begin{equation}
\mathbb{E}[\mathcal{L}_n]=A_{0,1}n+B_{0,1}+o(1)
\end{equation}
Moreover, $A_{a,b}=A_{b,a}$ because $A_{a,b}=\frac{\alpha_1'(1)}{\alpha_1(1)}=\frac{-e_1'(1)}{e_1(1)}$ where $e_1(1)$ is a constant and $e'_1(1)$ is symmetric between $a$ and $b$ as shown in \eqref{eprime}. In particular $A_{1,0}=A_{0,1}$, hence $\mathbb{E}[\mathcal{K}_n]-\mathbb{E}[\mathcal{L}_n]$ converges to a constant as $n\to\infty$. This implies the average number of positive and negative summands differ by a constant.

Equation \eqref{Var[K+L]} gives us a way to calculate variance of any joint density of $\mathcal{K}_n$ and $\mathcal{L}_n$. We can furthermore calculate the covariance and correlation of any two joint densities as a function of $e_1$ and $q_1$.

In particular, we prove that $\mathcal{K}_n+\mathcal{L}_n$ and $\mathcal{K}_n-\mathcal{L}_n$ have correlation decaying to zero with $n$. Indeed, from \eqref{Var[K+L]}:
\begin{equation}
{\rm Var}[\mathcal{K}_n]\ = \ C_{1,0}n+D_{1,0}+o(1).
\end{equation}
\begin{equation}
{\rm Var}[\mathcal{L}_n]\ = \ C_{0,1}n+D_{0,1}+o(1).
\end{equation}
\noindent Note that $C_{0,1}=C_{1,0}$ because again we have \be C_{a,b}\ = \  \left(\frac{x\alpha_1'(w)}{\alpha_1(w)}\right)'\Bigg|_{w=1}\ = \ - \left(\frac{we_1'(w)}{e_1(w)}\right)'\Big|_{w=1}\ee
where $e_1(w)$ does not depend on $a,b$ and $e'_1(w)$ is symmetric between $a,b$. Therefore,
\begin{equation}
{\rm Cov}[\mathcal{K}_n+\mathcal{L}_n,\mathcal{K}_n-\mathcal{L}_n] \ = \ \frac{{\rm Var}[2\mathcal{K}_n]+{\rm Var}[2\mathcal{L}_n]}{4} \ = \ {\rm Var}[\mathcal{K}_n]-{\rm Var}[\mathcal{L}_n]\ = \ O(1).
\end{equation}
Therefore
\begin{equation}
{\rm Corr}[\mathcal{K}_n,\mathcal{L}_n]=\frac{{\rm Cov}[\mathcal{K}_n, \mathcal{L}_n]}{\sqrt{{\rm Var}[\mathcal{K}_n]{\rm Var}[\mathcal{L}_n]}}\ = \ \frac{O(1)}{\theta(n)}\ =\ o(1)
\end{equation} (where $\theta(n)$ represents a function which is on the order of $n$). This implies $\mathcal{K}_n-\mathcal{L}_n$ and $\mathcal{K}_n,\mathcal{L}_n$ are uncorrelated as $n\to\infty$. This completes the proof of Theorem \ref{thm:Gaussianity[MW]}. \hfill $\Box$

\subsection{Proof of Theorem \ref{thm_generalGaussian}}

We now collect the pieces. The argument here is different than the one used in \cite{MW1}, and leads to a conceptually simpler proof (though we do have to wade through a good amount of algebra). The rest of this section is just mimicking the standard proof of the Central Limit Theorem, while at the same time isolating the values of the mean and variance.\\

To prove part $(a)$, we use the generating function $g_n(x)$ to calculate $\mu_n$ and $\sigma^2_n$ as follows:
\begin{equation}
\mu_n\ = \ \mathbb{E}[X_n]\ = \ \frac{\sum_{i=1}^n \rho_{i;n}\cdot i}{\sum_{i=1}^n \rho_{i;n}}\ = \ \frac{g_n'(1)}{g_n(1)}
\end{equation}

\begin{equation}
\sigma_n^2\ = \ \mathbb{E}[X_n^2]-\mu_n^2\ = \ \frac{\sum_{i=1}^n \rho_{i;n}\cdot i^2}{\sum_{i=1}^n \rho_{i;n}}-\mu_n^2 \ = \ \frac{[xg'_n(x)]'\big|_{x=1}}{g_n(1)}-\left(\frac{g_n'(1)}{g_n(1)}\right)^2.
\end{equation}
The calculations are then  straightforward:
\begin{equation}
g_n'(x)\ = \ \sum_{i=1}^\kappa [q_i(x)\alpha_i^n(x)]'\ = \ \sum_{i=1}^\kappa [q_i'(x)\alpha_i^n(x)+q_i(x)n\alpha_i^{n-1}(x)\alpha'_i(x)]
\end{equation}
\begin{align}\label{variance_formula}
[xg'_n(x)]' & \ = \ \sum_{i=1}^\kappa  \left(x[q_i'(x)\alpha_i^n(x)+q_i(x)n\alpha_i^{n-1}(x)\alpha'_i(x)]\right)'\nonumber\\
&\ = \ \sum_{i=1}^\kappa \left( q_i'(x)\alpha_i^n(x)+q_i(x)n\alpha_i^{n-1}(x)\alpha_i'(x)+\right.\nonumber\\
& \left. x\left[ q_i''(x)\alpha_i^n(x)+2q_i'(x)n\alpha_i^{n-1}(x)\alpha_i'(x)+q_in\alpha_i^{n-1}\alpha_i''(x)+q_i(x)n(n-1)\alpha_i^{n-2}(\alpha_i'(x))^2\right]\right).
\end{align}
Since $|\alpha_i(1)/\alpha_1(1)|<\lambda<1$ for each $i\geq 2$, we have
\begin{equation}
\sum_{i=2}^\kappa q_i(1)\alpha_i^n(1)\ = \ \alpha_1^n(1)\sum_{i=2}^\kappa q_i(1)\left(\frac{\alpha_i(1)}{\alpha_1(1)}\right)^n\ = \ o(\lambda^n)\alpha_1^n(1).
\end{equation}
Similarly,
\begin{equation}
\sum_{i=2}^\kappa [q_i(x)\alpha_i^n(x)]'\Big|_{x=1}\ = \ \alpha_1^n(1)\sum_{i=2}^\kappa \left[q'_i(1)+\frac{nq_i(1)\alpha'_i(1)}{\alpha'_i(1)}
\right]\left(\frac{\alpha_i(1)}{\alpha_1(1)}\right)^n\ = \ o(\lambda^n)\alpha_1^n(1)
\end{equation} and
\begin{equation}
\sum_{i=2}^\kappa \Big(x[q_i(x)\alpha_i^n(x)]'\Big)'\Big|_{x=1}\ = \ o(\lambda^n)\alpha_1^n(1).
\end{equation}
Hence
\begin{align}
\mu_n\ = \ \frac{g'_n(1)}{g_n(1)}& \ = \ \frac{[q_1'(1)\alpha_1^n(1)+q_1(1)n\alpha_i^{n-1}(1)\alpha'_1(1)]+ o(\lambda^n) \alpha_1^n(1)}{q_1(1)\alpha_1^n(1)+o(\lambda^n) \alpha_1^n(1)}\nonumber\\
&\ = \ \frac{q_1'(1)+q_1(1)n\frac{\alpha'_1(1)}{\alpha_1(1)}+o(\lambda^n)}{q_1(1)+o(\lambda^n)}
\ = \ \frac{q_1'(1)}{q_1(1)}+n\frac{\alpha_1'(1)}{\alpha_1(1)}+o(1).
\end{align}
Similarly,
\begin{align}
\sigma_n^2 & \ = \ \frac{[xg'_n(x)]'\big|_{x=1}}{g_n(1)}-\mu_n^2\nonumber\\ &\ = \ \frac{([x(q_1(x)\alpha_1(x))']'\Big|_{x=1}+o(\lambda^n)\alpha_1^n(1)}{q_1(1)\alpha_1^n(1)+o(\lambda^n)\alpha_1^n(1)}-\mu_n^2\nonumber\\
&\ = \ \frac{q_1'}{q_1}+\frac{n\alpha_1'}{\alpha_1}+\frac{q''_1}{q_1(1)}+\frac{2q'_1n\alpha_1'}{\alpha_1}+\frac{n\alpha_1''}{\alpha_1}+\frac{n(n-1)(\alpha'_1)^2}{\alpha_1^2}-\left(\frac{\alpha'_1}{\alpha_1}n+\frac{q'_1}{q_1}+o(1)\right)^2\nonumber\\
& \ = \ \frac{\alpha_1(\alpha_1'+\alpha_1'')-(\alpha_1')^2}{\alpha_1^2}\cdot n+\frac{q_1(q_1'+q_1'')-(q_1')^2}{q_1^2}+o(1).
\end{align}
Here we apply \eqref{variance_formula} and use $q_1,\alpha_1$ short for $q_1(1),\alpha_1(1)$. The last things we need are
\be \frac{\alpha_1(1)[\alpha_1'(1)+\alpha_1''(1)]-\alpha_1'(1)^2}{\alpha_1(1)^2}
\ = \ \left(\frac{x\alpha_1'(x)}{\alpha_1(x)}\right)\Bigg|_{x=1}\ee
and
\be\frac{q_1(1)[q_1'(1)+q_1''(1)]-q_1'(1)^2}{q_1(1)^2}\ = \ \left(\frac{xq_1'(x)}{q_1(x)}\right)\Bigg|_{x=1},\ee
which are simple enough to check directly. This completes the proof of part $(a)$ of Theorem \ref{thm_generalGaussian}.\\ \

To prove part $(b)$ of the theorem, we use the method of moment generating functions, showing that moment generating function of $X_n$ converges to that of a Gaussian distribution as $n\to\infty$. (We could use instead the characteristic functions, but the moment generating functions have good convergence properties here.) The moment generating function of $X_n$ is
\be M_{X_n}(t)=\mathbb{E}[e^{tX_n}]\ = \ \frac{\sum_i \rho_{i;n} e^{ti}}{\sum_i {\rho_{i;n}}}\ = \ \frac{g_n(e^t)}{g_n(1)}\ = \ \frac{\sum_{i=1}^\kappa q_i(e^t)\alpha_i^n(e^t)}{\sum_{i=1}^\kappa q_i(1)\alpha_i^n(1)}.\ee

Since $|\alpha_i(e^t)|<|\alpha_1(e^t)|$ for any $i\geq 2$, the main term of $g_n(e^t)$ is $q_1(e^t)\alpha_1(e^t)$. We thus write
\begin{align}
M_{X_n}(t) &\ = \   \frac{\sum_{i=1}^\kappa q_i(e^t)\alpha_i^n(e^t)}{\sum_{i=1}^\kappa q_i(1)\alpha_i^n(1)}
\ = \  \frac{q_1(e^t)\alpha_1^n(e^t)\left[1+\sum_{i=2}^k \frac{q_i(e^t)}{q_1(e^t)}\left(\frac{\alpha_i(e^t)}{\alpha_1(e^t)}\right)^n\right]}{q_1(1)\alpha_1^n(1)\left[1+\sum_{i=2}^\kappa \frac{q_i(1)}{q_1(1)}\left(\frac{\alpha_i(1)}{\alpha_1(1)}\right)^n\right]}\nonumber\\
&\ = \ \frac{q_1(e^t)\alpha_1^n(e^t)[1+ O(\kappa Q\lambda^n)]}{q_1(1)\alpha_1^n(1)[1+ O(\kappa Q\lambda^n)]}
\ = \ \frac{q_1(e^t)}{q_1(1)}\left(\frac{\alpha_1(e^t)}{\alpha_1(1)}\right)^n\left(1+O(\kappa Q\lambda^n)\right),
\end{align}
where $Q=\max_{i\geq 2} \sup_{t\in [-\delta,+\delta]} \frac{q_i(e^t)}{q_1(e^t)}$. As $0<\lambda<1$, $\kappa Q\lambda^n$ rapidly decays when $n$ gets large. Taking the logarithm of both sides yields
\begin{equation}
\log M_{X_t}\ = \ \log \frac{q_1(e^t)}{q_1(1)}+n\log\frac{\alpha_1(e^t)}{\alpha_1(1)}+\log\left(1+O(\kappa Q\lambda^n)\right)\ = \ \log \frac{q_1(e^t)}{q_1(1)}+n\log\frac{\alpha_1(e^t)}{\alpha_1(1)}+o(1).
\end{equation}
Let
$Y_n=\frac{X_n-\mu_n}{\sigma_n}$, then the moment generating function of $Y_n$ is
\begin{equation}
M_{Y_n}(t)\ = \ \mathbb{E}[e^{t(X_n-\mu_n)/\sigma_n}]\ = \ M_{X_n}(t/\sigma_n) e^{-t\mu_n/\sigma_n}.
\end{equation}
Therefore
\begin{equation}\label{log_M_Yn}
\log M_{Y_n}(t)\ = \ \frac{-t\mu_n}{\sigma_n}+ \log \frac{q_1(e^{t/\sigma_n})}{q_1(1)}+n\log\frac{\alpha_1(e^{t/\sigma_n})}{\alpha_1(1)}+o(1).
\end{equation}
Since $\sigma_n=\theta (\sqrt{n})$, $t/\sigma_n\to 0$ as $n\to\infty$. Hence \begin{equation}\label{log_q}
\lim_{n\to\infty} \log \frac{q_1(e^{t/\sigma_n})}{q_1(1)}\ = \ \log 1\ = \ 0.
\end{equation}
Using the Taylor expansion of degree two at 1,  we can write $\alpha_1(x)$ as
\begin{equation}
\alpha_1(x)=\alpha_1(1)+\alpha'(1)(x-1)+\frac{\alpha_1''(1)}{2} (x-1)^2+O((x-1)^3).
\end{equation}
Substituting $x=e^{t/\sigma_n}=1+\frac{t}{\sigma_n}+\frac{t^2}{2\sigma_n^2}+O(\frac{t^3}{\sigma_n^3})$ and noting that $\sigma_n=\theta(n^{1/2})$), we get
\begin{equation}
\alpha_1(e^{t/\sigma_n})\ = \ \alpha_1(1)+\alpha'(1)(\frac{t}{\sigma_n}+\frac{t^2}{2\sigma_n^2}+O(n^{-3/2}))+\frac{\alpha_1''(1)}{2} \left[\frac{t^2}{\sigma^2_n}+O(n^{-3/2})\right]+O(n^{-3/2}).
\end{equation}
Taking the logarithm and using the Taylor expansion $\log(1+x)=x-x^2/2+O(x^3)$ gives us:
\begin{align}\label{log_alpha}
\log \frac{\alpha_1(e^{t/\sigma_n})}{\alpha_1(1)} & \ = \ \log \left( 1+\frac{\alpha_1'(1)}{\alpha_1(1)}\frac{t}{\sigma_n}+\frac{\alpha_1'(1)+\alpha''_1(1)}{\alpha_1(1)}\frac{t^2}{2\sigma^2_n}+O(n^{-3/2}\right)\nonumber\\
& \ = \ \frac{\alpha_1'(1)}{\alpha_1(1)}\frac{t}{\sigma_n}+\frac{\alpha_1'(1)+\alpha''_1(1)}{\alpha_1(1)}\frac{t^2}{2\sigma^2_n}-\left(\frac{\alpha_1'(1)}{\alpha_1(1)}\right)^2\frac{t^2}{2\sigma_n^2}+O(n^{-3/2}).
\end{align}
Substituting \eqref{log_q} and \eqref{log_alpha} into \eqref{log_M_Yn}:
\begin{align}
\log M_{Y_n}(t) & \ = \ -\frac{t\mu_n}{\sigma_n}+n\left(\frac{\alpha_1'(1)}{\alpha_1(1)}\frac{t}{\sigma_n}+\frac{\alpha_1'(1)+\alpha''_1(1)}{\alpha_1(1)}\frac{t^2}{2\sigma^2_n}-\left(\frac{\alpha_1'(1)}{\alpha_1(1)}\right)^2\frac{t^2}{2\sigma_n^2}+O(n^{-3/2}) \right)+o(1)\nonumber\\
& \ = \ \left(n\frac{\alpha'_1(1)}{\alpha_1(1)} - \mu_n\right)\frac{t}{\sigma_n} + n\frac{\alpha_1(1)[\alpha_1'(1)+\alpha_1''(1)] - \alpha_1'(1)^2}{\alpha_1(1)^2}\frac{t^2}{2\sigma_n^2}+o(1).
\end{align}
Using the same notations $A,B,C,D$ as in Theorem \ref{thm_generalGaussian}:
\begin{align}
\log M_{Y_n}(t) & \ = \ \frac{An-\mu_n}{\sigma_n}\cdot t+\frac{Cn}{\sigma_n^2}\cdot \frac{t^2}{2}+o(1)\nonumber\\
& \ = \ \frac{B+o(1)}{\sqrt{Cn+D+o(1)}}\cdot t+\frac{Cn}{Cn+D+o(1)}\cdot \frac{t^2}{2}+o(1)\nonumber\\
& \ = \ \frac{t^2}{2}+o(1).
\end{align}
This implies the moment generating function of $Y_n$ converges to that of the standard normal distribution. So as $n\to\infty$, the moment generating function of  $X_n$ converges to a Gaussian, which implies convergence in distribution.
\hfill $\Box$

\section{Distribution of Gaps}\label{sec:distrgaps}

\subsection{Notation and Counting Lemmas}

In this section we prove our results about gaps between summands arising from $k$-Skipponacci far-difference representations. Specifically, we are interested in the probability of finding a gap of size $j$ among all gaps in the decompositions of integers $x \in [R_k(n),R_k(n+1)]$. In this section, we adopt the notation used in \cite{BBGILMT}. If $\epsilon_i \in \{-1, 1\}$ and
\be x \ = \ \epsilon_j S^{(k)}_{i_j} + \epsilon_{j-1} S^{(k)}_{i_{j-1}} + \cdots + \epsilon_1 S^{(k)}_{i_1} \ee
is a legal far-difference representation (which implies that $i_j = n$), then the gaps are
\be i_j - i_{j-1}, \ \ \ \ i_{j-1} - i_{j-2}, \ \ \ \ \dots,  \ \ \ \ i_2 - i_1. \ee

Note that we do not consider the `gap' from the beginning up to $i_1$, though if we wished to include it there would be no change in the limit of the gap distributions. Thus in any $k$-Skipponacci far-difference representations, there is one fewer gap than summands. The greatest difficulty in the subject is avoiding double counting of gaps, which motivates the following definition.

\begin{defn}[Analogous to Definition 1.4 in \cite{BBGILMT}] \label{GapNotation} \
\begin{itemize}
\item Let $X_{i,i+j}(n)$ denote the number of  integers $x \in [R_k(n),R_k(n+1)]$ that have a gap of length $j$ that starts at $S^{(k)}_i$ and ends at $S^{(k)}_{i+j}$.
\item Let $Y(n)$ be the total number of gaps in the far-difference decomposition for \\ $x \in [R_k(n), R_k(n+1)]$:
\begin{equation} \label{Y(n)}
Y(n) \ := \  \sum_{i=1}^n \sum_{j=0}^n X_{i,i+j}(n).
\end{equation}
Notice that $Y(n)$ is equivalent to the total number of summands in all decompositions for all $x$ in the given interval \emph{minus} the number of integers in that interval. The main term is thus the total number of summands, which is
\be \left[A_{1,1}n + B_{1,1} + o(1)\right] \cdot [R_k(n+1)-R_k(n)] \ = \ A_{1,1} n [R_k(n+1)-R_k(n)], \ee
 as we know from \S\ref{sec:subsecgaussianity} that $\mathbb{E}[\mathcal{K}_n+\mathcal{L}_n]=A_{1,1}n + B_{1,1} + o(1)$.
\item Let $P_n(j)$ denote the proportion of gaps from decompositions of $x$ $\in$ $[R_k(n)$, $R_k(n+1)]$ that are of length $j$:
\begin{equation} \label{P_n(j)}
P_n(j) \ := \  \frac{\sum_{i=1}^{n-j} X_{i,i+j}(n)}{Y(n)},
\end{equation}
and let
\begin{equation} \label{P(j)}
P(j) \ := \  \lim_{n\to\infty} P_n(j)
\end{equation} (we will prove this limit exists).
\end{itemize}
\end{defn}

Our proof of Theorem \ref{thm:gapresult} starts by counting the number of gaps of constant size in the $k$-Skipponacci far-difference representations of integers. To accomplish this, it is useful to adopt the following notation.

\begin{defi} Notation for counting integers with particular $k$-Skipponacci summands. \label{defi:N(S)notation}
\begin{itemize}

\item Let $N(\pm S^{(k)}_i,\pm S^{(k)}_j)$ denote the number of integers whose decomposition begins with $\pm S^{(k)}_i$ and ends with $\pm S^{(k)}_j$.

\item Let $N(\pm F_i)$ be the number of integers whose decomposition ends with $\pm F_i$.

\end{itemize}
\end{defi}

The following results, which are easily derived using the counting notation in Definition \ref{defi:N(S)notation}, are also useful.

\begin{lem} \label{lem:counting}
\begin{equation} \label{N(S^{(k)}_n)shift}
N(\pm S^{(k)}_i,\pm S^{(k)}_j) \ = \  N(\pm S^{(k)}_1,\pm S^{(k)}_{j-i+1}).
\end{equation}
\begin{equation} \label{N(S^{(k)}_n)in-exclusion}
N(-S^{(k)}_1, +S^{(k)}_j) + N(+S^{(k)}_1, +S^{(k)}_j) \ = \  N(+S^{(k)}_j) - N(+S^{(k)}_{j-1}).
\end{equation}
\begin{equation} \label{N(S^{(k)}_n)cardinality}
N(+S^{(k)}_i) \ = \  R_k(i) - R_k(i-1).
\end{equation}
\end{lem}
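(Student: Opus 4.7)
The plan is to prove each of the three identities by short bijective or direct counting arguments, resting on two observations. First, the legality conditions for a $k$-Skipponacci far-difference representation (same-sign summands at least $2k+2$ apart in index, opposite-sign summands at least $k+2$ apart) depend only on differences of indices and are therefore invariant under a uniform index shift. Second, the proof of Theorem \ref{Thm:Far-Diff} partitions the positive integers so that those whose decomposition has leading (largest-index) summand $+S^{(k)}_n$ are exactly the integers in $(R_k(n-1), R_k(n)]$. The latter immediately yields \eqref{N(S^{(k)}_n)cardinality}, since $N(+S^{(k)}_i) = |(R_k(i-1), R_k(i)]| = R_k(i) - R_k(i-1)$.

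For \eqref{N(S^{(k)}_n)shift}, I would define a bijection between the two sets of decompositions by the index-shift map that subtracts $i-1$ from every summand's index. In the notation $N(\pm S^{(k)}_i, \pm S^{(k)}_j)$, the first argument indexes the smallest summand and the second the largest, so a decomposition with smallest $\pm S^{(k)}_i$ and largest $\pm S^{(k)}_j$ is sent to one with smallest $\pm S^{(k)}_1$ and largest $\pm S^{(k)}_{j-i+1}$; signs are preserved summand-by-summand, and legality is preserved by the first observation above. The inverse map adds $i-1$ back to every index, and uniqueness from Theorem \ref{Thm:Far-Diff} guarantees that counting decompositions is the same as counting integers.

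For \eqref{N(S^{(k)}_n)in-exclusion}, the left-hand side counts integers whose decomposition has leading summand $+S^{(k)}_j$ together with a summand (of either sign) at index $1$, while $N(+S^{(k)}_j)$ counts all integers with leading summand $+S^{(k)}_j$. It therefore suffices to show that the integers with leading $+S^{(k)}_j$ and smallest summand at index $\ge 2$ are in bijection with those counted by $N(+S^{(k)}_{j-1})$. This is again an index-shift bijection: subtracting $1$ from every summand's index in a decomposition with leading $+S^{(k)}_j$ and smallest index $\ge 2$ produces a legal decomposition with leading $+S^{(k)}_{j-1}$, and conversely adding $1$ to every summand's index in a decomposition with leading $+S^{(k)}_{j-1}$ gives a legal decomposition with leading $+S^{(k)}_j$ whose smallest index is $\ge 2$.

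The main subtlety, such as it is, is pinning down the convention for $N(\pm S^{(k)}_i, \pm S^{(k)}_j)$; once identity \eqref{N(S^{(k)}_n)shift} forces $i\le j$ with $i$ indexing the smallest summand and $j$ the largest, the entire lemma reduces to two applications of the same index-shift bijection together with a direct count from the interval partition of Theorem \ref{Thm:Far-Diff}. There is no hard analytic step here; the one place to be careful is simply checking that the gap conditions, being statements about differences of indices, are untouched by the shifts.
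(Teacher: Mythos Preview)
Your proposal is correct and essentially matches the paper's proof: both rely on the observation that the legality constraints depend only on index differences (giving the shift identity \eqref{N(S^{(k)}_n)shift}) and on the interval partition from Theorem~\ref{Thm:Far-Diff} (giving \eqref{N(S^{(k)}_n)cardinality}). For \eqref{N(S^{(k)}_n)in-exclusion} the paper writes the complementary count as a sum $\sum_{m\ge 2} N(\pm S^{(k)}_m,+S^{(k)}_j)$ and then shifts each term down by one index before resumming to $N(+S^{(k)}_{j-1})$, whereas you package the same idea as a single shift-by-$1$ bijection between ``leading term $+S^{(k)}_j$, smallest index $\ge 2$'' and ``leading term $+S^{(k)}_{j-1}$''; these are the same argument, and your phrasing is arguably cleaner.
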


\begin{proof} First, note that \eqref{N(S^{(k)}_n)shift} describes a shift of indices, which doesn't change the number of possible decompositions. For \eqref{N(S^{(k)}_n)in-exclusion}, we can apply inclusion-exclusion to get
\bea
& & N(-S^{(k)}_1, +S^{(k)}_j) + N(+S^{(k)}_1, +S^{(k)}_j)
\nonumber\\ & & \ \ \ \ \ = \  N(+S^{(k)}_j) - \left[N(+S^{(k)}_2, +S^{(k)}_j) + N(+S^{(k)}_3, +S^{(k)}_j) + \cdots\right] \nonumber\\ & & \ \ \ \ \ = \  N(+S^{(k)}_j) - \left[N(+S^{(k)}_1, +S^{(k)}_{j-1}) + N(+S^{(k)}_2, +S^{(k)}_{j-1}) + \cdots\right] \nonumber\\ & & \ \ \ \ \ = \  N(+S^{(k)}_j) - N(+S^{(k)}_{j-1}).
\eea

Finally, for \eqref{N(S^{(k)}_n)cardinality}, recall that the $k$-Skipponaccis partition the integers into intervals of the form $[S^{(k)}_n-R_k(n-k-2), R_k(n)]$, where $S^{(k)}_n$ is the main term of all of the integers in this range. Thus $N(+F_i)$ is the size of this interval, which is just $R_k(i) - R_k(i-1)$, as desired. \end{proof}

\subsection{Proof of Theorem \ref{thm:gapresult}}

We take a combinatorial approach to proving Theorem \ref{thm:gapresult}. We derive expressions for $X_{i,i+c}(n)$ and $X_{i,i+j}(n)$ by counting, and then we use the Generalized Binet's Formula for the $k$-Skipponaccis in Lemma \ref{Binet-Skipponacci} to reach the desired expressions for $P_n(j)$, and then take the limit as $n\to\infty$.

\begin{proof}[Proof of Theorem \ref{thm:gapresult}] We first consider gaps of length $j$ for $k+2 \le j < 2k+2$, then show that the case with gaps of length $j \ge 2k+2$ follows from a similar calculation. It is important to separate these two intervals as there are sign interactions that must be accounted for in the former that do not affect our computation in the latter. From Theorem \ref{Thm:Far-Diff}, we know that there are no gaps of length $k+1$ or smaller. Using Lemma \ref{lem:counting}, we find a nice formula for $X_{i,i+j}(n)$. For convenience of notation, we will let $R_k$ denote $R_k(n)$ in the following equations:
\begin{align} \label{X(i,i+c)}
X_{i,i+j}(n)
\;\ = \ &\; N(+S^{(k)}_i)\left[N(+S^{(k)}_{n-i-j+1}) - N(+S^{(k)}_{n-i-j})\right] \nonumber \\
\;\ = \ &\; (R_i - R_{i-1})\left[(R_{n-i-j+1} - R_{n-i-j}) - (R_{n-i-j} - R_{n-i-j-1})\right] \nonumber \\
\;\ = \ &\; R_{i-k-1} \cdot (R_{n-i-j-k} - R_{n-i-j-k-1}) \nonumber \\
\;\ = \ &\; R_{i-k-1} \cdot R_{n-i-j-2k-1}.
\end{align}

To continue, we need a tractable expression for $R_k(n)$. Using the results from the Generalized Binet's Formula in Lemma \ref{Binet-Skipponacci}, we can express $R_k(n)$ as
\begin{align} \label{R_nBinet}
R_k(n)
\;\ = \ &\; S^{(k)}_n + S^{(k)}_{n-2k-2} + S^{(k)}_{n-4k-4} + S^{(k)}_{n-6k-6} + \cdots \nonumber \\
\;\ = \ &\; a_1\lambda_1^n + a_1\lambda_1^{n-2k-2} + a_1\lambda_1^{n-4k-4} + a_1\lambda_1^{n-6k-6} + \cdots \nonumber \\
\;\ = \ &\; a_1\lambda_1^n\left[1 + \lambda_1^{-2k-2} + \lambda_1^{-4k-4} + \lambda_1^{-6k-6} + \cdots\right] \nonumber \\
\;\ = \ &\; a_1\lambda_1^n\left[1 + \left(\lambda_1^{-2k-2}\right) + \left(\lambda_1^{-2k-2}\right)^2 + \left(\lambda_1^{-2k-2}\right)^3 + \cdots\right] \nonumber \\
\;\ = \ &\; \frac{a_1\lambda_1^n}{1-\lambda_1^{-2k-2}} + O_k(1)
\end{align} (where the $O_k(1)$ error depends on $k$ and arises from extending the finite geometric series to infinity). We substitute this expression for $R_k(n)$ into the formula from \eqref{X(i,i+c)} for $X_{i,i+j}(n)$, and find
\bea \label{X(i,i+c)Binet}
X_{i,i+j}(n)
& \ = \ & R_{i-k-1} \cdot R_{n-i-j-2k-1} \nonumber\\ & = & \frac{a_1\lambda_1^{i-k-1}(1 + O_k(1))}{1-\lambda_1^{-2k-2}} \cdot \frac{a_1\lambda_1^{n-i-j-2k-1}(1 + O_k(1))}{1-\lambda_1^{-2k-2}} \nonumber\\
& \ = \ & \frac{a_1^2\lambda_1^{n-j-3k-2}(1 + O_k(\lambda_1^{-i} + \lambda_1^{-n+i+j})}{\left(1-\lambda_1^{-2k-2}\right)^2}.
\eea
We then sum $X_{i,i+j}(n)$ over $i$. Note that almost all $i$ satisfy $\log\log n \ll i \ll n - \log \log n$, which means the error terms above are of significantly lower order (we have to be careful, as if $i$ or $n-i$ is of order 1 then the error is of the same size as the main term). Using our expression for $Y(n)$ from Definition \ref{GapNotation} we find
\begin{align} \label{P_n(c)proof}
P_n(j)
\;\ = \ &\; \frac{\sum_{i=1}^{n-j} X_{i,i+j}(n)}{Y(n)} \nonumber \\
\;\ = \ &\; \frac{a_1^2\lambda_1^{n-j-3k-2}(n-j)(1 + o_k(n \lambda_1^n))}{\left[A_{1,1}n+B_{1,1} + o(1)\right] \cdot \left(1-\lambda_1^{-2k-2}\right)^2 \cdot a_1\lambda_1^n(\lambda_1-1) + O(\lambda_1^n)}.
\end{align}
Taking the limit as $n\to\infty$ yields
\begin{align} \label{P(c)proof}
P(j) \ = \ \lim_{n\to\infty} P_n(j) \ = \ \frac{a_1\lambda_1^{-3k-2}}{A_{1,1} \left(1-\lambda_1^{-2k-2}\right)^2 (\lambda_1-1)}\lambda_1^{-j}.
\end{align}

For the case where $j \ge 2k+2$, the calculation is even easier, as we no longer have to worry about sign interactions across the gap (that is, $S^{(k)}_i$ and $S^{(k)}_{i+j}$ no longer have to be of opposite sign). Thus the calculation of $X_{i,i+j}(n)$ reduces to
\begin{align} \label{X(i,i+j)}
X_{i,i+j}(n)
\;\ = \ &\; N(+S^{(k)}_i)N(+S^{(k)}_{n-i-j})\nonumber \\
\;\ = \ &\; (R_i - R_{i-1})(R_{n-i-j} - R_{n-i-j-1}) \nonumber \\
\;\ = \ &\; R_{i-k-1} \cdot R_{n-i-j-k-1}.
\end{align}
We again use \eqref{R_nBinet} to get
\begin{equation} 
X_{i,i+c}(n)
\;\ = \ \; R_{i-k-1} \cdot R_{n-i-j-k-1}
\;\ = \ \; \frac{a_1^2\lambda_1^{n-j-2k-2}(1 + o_k(\lambda_1^n))}{\left(1-\lambda_1^{-2k-2}\right)^2}.
\end{equation}
Which, by a similar argument as before, gives us
\begin{equation} \label{P(j)proof}
P(j)
\;\ = \ \; \frac{a_1\lambda_1^{-2k-2}}{A_{1,1} \left(1-\lambda_1^{-2k-2}\right)^2 (\lambda_1-1)}\lambda_1^{-j},
\end{equation} completing the proof.\end{proof}

\section{Generalized Far-Difference Sequences}\label{sec:genfardiffseq}

The $k$-Skipponaccis give rise to unique far-difference representations where same signed indices are at least $k + 2$ apart and opposite signed indices are at least $2k+2$ apart. We consider the reverse problem, namely, given a pair $(s,d)$ of positive integers, when does there exist a sequence $\{a_n\}$ such that every integer has a unique far-difference representation where same signed indices are at least $s$ apart and opposite signed indices are at least $d$ apart. We call such representations $(s,d)$ far-difference representations.

\subsection{Existence of Sequences}

\begin{proof}[Proof of Theorem \ref{farDiffRec}]
Define
\be
R^{(s,d)}_n \ = \ \sum_{i=0}^{\lfloor n/s\rfloor}a_{n-is}\ = \  a_n+a_{n-s}+a_{n-2s}+\cdots.
\ee
For each $n$, the largest number that can be decomposed using $a_n$ as the largest summand is $R^{(s,d)}_n$, while the smallest one is $a_n-R^{(s,d)}_{n-d}$. It is therefore natural to break our analysis up into intervals $I_n=[a_n-R^{(s,d)}_{n-d},R^{(s,d)}_n]$.

We first prove by induction that
\begin{equation}\label{condition1}
a_n\ = \ R^{(s,d)}_{n-1}+R^{(s,d)}_{n-d}+1,
\end{equation}  or equivalently, $a_n-R^{(s,d)}_{n-d}=R^{(s,d)}_{n-1}+1$ for all $n$, so that these intervals $\{I_n\}_{n=1}^\infty$ are disjoint and cover $\mathbb{Z}^+$.

Indeed, direct calculation proves \eqref{condition1} is true for $n=1,\dots,\max(s,d)$. For $n>\max(s,d)$, assume it is true for all positive integers up to $n-1$. We have
\begin{align}
a_{n-s}
\ = \ R^{(s,d)}_{n-s-1}+R^{(s,d)}_{n-s-d}+1
\ =& \ (R^{(s,d)}_{n-1}-a_{n-1})+(R^{(s,d)}_{n-d}-a_{n-d})+1 \nonumber \\
\Rightarrow \ R^{(s,d)}_{n-1}+R^{(s,d)}_{n-d}+1
\ =& \  a_{n-s}+a_{n-1}+a_{n-d}\ = \ a_n.
\end{align}
This implies that \eqref{condition1} is true for $n$ and thus true for all positive integers.\\

We prove that every integer is uniquely represented as a sum of $\pm a_n$'s in which every two terms of the same sign are at least $s$ apart in index and every two terms of opposite sign are at least $d$ apart in index. We prove by induction that any number in the interval $I_n$ has a unique $(s,d)$ far-difference representation with main term (the largest term) be $a_n$.

It is easy to check for $n\leq \max(s,d)$. For $n>\max(s,d)$, assume it is true up to $n-1$. Let $x$ be a number in $I_n$, where $a_n-R^{(s,d)}_{n-d}\leq x\leq R^{(s,d)}_n$. There are two cases to consider.
\begin{enumerate}
\item If $a_n\leq x\leq R^{(s,d)}_n$, then either $x=a_n$ or $1\leq x-a_n\leq R^{(s,d)}_n-a_n=R^{(s,d)}_{n-s}$. By the induction assumption, we know that $x-a_n$ has a far-difference representation with main term of at most $a_{n-s}$. It follows that $x=a_n+(x-a_n)$ has a legal decomposition.
\item If $a_n-R^{(s,d)}_{n-d}\leq x<a_n$ then $1\leq a_n-x\leq R^{(s,d)}_{n-d}$. By the induction assumption, we know that $a_n-x$ has a far-difference representation with main term at most $a_{n-d}$. It follows that $x=a_n-(a_n-x)$ has a legal decomposition.
\end{enumerate}

To prove uniqueness, assume that $x$ has two difference decompositions $\sum_i \pm a_{n_i}=\sum_i \pm a_{m_i}$, where $n_1>n_2>\dots$ and $m_1>m_2>\dots$. Then it must be the case that $x$ belongs to both $I_{n_1}$ and $I_{m_1}$. However, these intervals are disjoint, so by contradiction we have $n_1=m_1$. Uniqueness follows by induction.
\end{proof}

\begin{remark}
As the recurrence relation of $a_n$ is symmetric between $s$ and $d$, it is the initial terms that define whether a sequence has an $(s,d)$ or a $(d,s)$ far-difference representation.
\end{remark}

\begin{cor}
The Fibonacci numbers $\{1,2,3,5,8,\dots\}$ have a $(4,3)$ far-difference representation.
\end{cor}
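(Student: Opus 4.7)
The plan is to deduce this corollary directly from Theorem \ref{farDiffRec} by showing that the sequence $\{a_n\}$ produced by the construction with $s=4,\ d=3$ is precisely the Fibonacci sequence as indexed in this paper ($F_1=1,\ F_2=2,\ F_3=3,\ F_4=5,\dots$). Since Theorem \ref{farDiffRec} guarantees that the constructed $\{a_n\}$ admits a unique $(s,d)$ far-difference representation, matching $\{a_n\}$ with $\{F_n\}$ will finish the proof.

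First I would compute the initial segment. With $\min(s,d)=3$ and $\max(s,d)=4$, clause (i) gives $a_1=1,\ a_2=2,\ a_3=3$. For $n=4$, since $d\le s$, clause (ii) yields
\[
a_4 \ = \ a_3 + a_1 + 1 \ = \ 3+1+1 \ = \ 5,
\]
which matches $F_4 = 5$. Next I would handle the recursive regime $n>4$. Clause (iii) gives
\[
a_n \ = \ a_{n-1} + a_{n-4} + a_{n-3},
\]
so I would verify that the Fibonaccis satisfy the same recurrence: applying $F_n=F_{n-1}+F_{n-2}$ once more to $F_{n-2}$ gives $F_n = F_{n-1} + F_{n-3} + F_{n-4}$, exactly the recurrence for $a_n$. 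An easy induction on $n$ then shows $a_n = F_n$ for all $n\ge 1$.

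With the identification $\{a_n\}=\{F_n\}$ in hand, Theorem \ref{farDiffRec} tells us that every integer has a unique $(4,3)$ far-difference representation in terms of the $a_n=F_n$, which is exactly the claim of the corollary. No genuine obstacle arises here; the only thing to be careful about is bookkeeping for the indexing convention (since the paper uses $F_1=1,\ F_2=2$ rather than the standard $F_0=0,\ F_1=1$), and checking that the single base case $n=4$ lands correctly under the $d\le s$ branch of clause (ii) rather than the other branch.
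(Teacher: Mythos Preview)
Your proof is correct and follows essentially the same approach as the paper: verify that the Fibonacci sequence coincides with the $(s,d)=(4,3)$ sequence of Theorem~\ref{farDiffRec} by checking the initial terms $F_1=1$, $F_2=2$, $F_3=3$, $F_4=F_3+F_1+1=5$, and then observing $F_n=F_{n-1}+F_{n-2}=F_{n-1}+F_{n-3}+F_{n-4}$ for $n\ge 5$. The paper's proof is terser but identical in substance.
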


\begin{proof}
We can rewrite Fibonacci sequence as $F_1=1, F_2=2, F_3=3$, $F_4=F_3+F_1+1$, and $F_n=F_{n-1}+F_{n-2} = F_{n-1} + (F_{n-3}+F_{n-4})$ for $n\geq 5$.
\end{proof}

\begin{cor}
The $k$-Skipponacci numbers, which are defined as $a_n=n$ for $n\leq k$ and $a_{n+1}=a_n+a_{n-k}$ for $n>k$, have a $(2k+2,k+2)$ far-difference representation.
\end{cor}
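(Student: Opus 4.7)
The plan is straightforward verification: I will check that the $k$-Skipponacci sequence $\{S^{(k)}_n\}$ satisfies the defining recurrence in Theorem \ref{farDiffRec} with the specific parameters $s=2k+2$ and $d=k+2$ (so $d\le s$), and then invoke Theorem \ref{farDiffRec} directly to conclude the existence and uniqueness of the $(2k+2,k+2)$ far-difference representation.

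First I would handle the base window $1\le n\le \min(s,d)=k+2$. Here we need $S^{(k)}_n = n$. By definition, $S^{(k)}_n = n$ for $1\le n \le k+1$, and a single application of the Skipponacci recurrence gives $S^{(k)}_{k+2}=S^{(k)}_{k+1}+S^{(k)}_1 = (k+1)+1 = k+2$, matching the required initial condition.

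Next I would handle the middle window $\min(s,d)<n\le \max(s,d)$, i.e.\ $k+2<n\le 2k+2$. Since $d\le s$, Theorem \ref{farDiffRec} requires $a_n = a_{n-1}+a_{n-d}+1 = a_{n-1}+a_{n-k-2}+1$. Using the Skipponacci recurrence $S^{(k)}_n = S^{(k)}_{n-1}+S^{(k)}_{n-k-1}$, this reduces to checking the identity $S^{(k)}_{n-k-1} = S^{(k)}_{n-k-2}+1$. In the range $k+3\le n\le 2k+2$ we have $1\le n-k-2<n-k-1\le k+1$, so both indices lie in the initial segment where $S^{(k)}_m = m$, and the identity follows trivially. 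Finally, for $n>\max(s,d)=2k+2$, Theorem \ref{farDiffRec} requires $a_n = a_{n-1}+a_{n-s}+a_{n-d} = a_{n-1}+a_{n-2k-2}+a_{n-k-2}$. This is obtained by applying the Skipponacci recurrence twice:
\begin{equation}
S^{(k)}_n \ = \ S^{(k)}_{n-1}+S^{(k)}_{n-k-1} \ = \ S^{(k)}_{n-1}+S^{(k)}_{n-k-2}+S^{(k)}_{n-2k-2},
\end{equation}
which is exactly the required relation.

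Since the $k$-Skipponacci sequence matches both the initial conditions and the recursion characterizing the sequence produced by Theorem \ref{farDiffRec} with $(s,d)=(2k+2,k+2)$, an obvious induction shows $\{S^{(k)}_n\}$ coincides with that sequence, and Theorem \ref{farDiffRec} then delivers the unique $(2k+2,k+2)$ far-difference representation. There is no real obstacle here: the only point that requires a moment's care is ensuring that the indices $n-k-1$ and $n-k-2$ in the middle window really do fall inside the initial segment $\{1,\ldots,k+1\}$ where $S^{(k)}_m = m$, which is precisely why the $``+1"$ correction in Theorem \ref{farDiffRec} arises in this regime.
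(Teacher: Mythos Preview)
Your proposal is correct and takes essentially the same approach as the paper: both verify that the $k$-Skipponacci recurrence, applied twice, yields $S^{(k)}_n = S^{(k)}_{n-1} + S^{(k)}_{n-k-2} + S^{(k)}_{n-2k-2}$ and that the initial conditions of Theorem~\ref{farDiffRec} with $(s,d)=(2k+2,k+2)$ are met. Your version simply makes explicit the middle-window check $k+2<n\le 2k+2$ (where the ``$+1$'' arises from the initial segment $S^{(k)}_m=m$), which the paper leaves implicit in the phrase ``using the same initial conditions.''
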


\begin{proof}
This follows from writing the recurrence relation as $a_n=a_{n-1}+a_{n-k-1}=a_{n-1}+a_{n-k-2}+a_{n-2k-2}$ and using the same initial conditions.
\end{proof}

\begin{cor}
Every positive integer can be represented uniquely as a sum of $\pm 3^n$ for $n=0,1,2,\dots$.
\end{cor}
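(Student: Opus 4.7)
The plan is to deduce the corollary directly from Theorem \ref{farDiffRec} by specializing to $s = d = 1$. First I would verify that the construction in the theorem, when applied with these parameters, produces precisely the sequence $a_n = 3^{n-1}$. The base case (i) gives $a_1 = 1$, since $\min(s,d) = 1$. The intermediate case (ii) covers indices $\min(s,d) < n \le \max(s,d)$, which is empty when $s = d$, so nothing happens there. The recursive case (iii) then applies for all $n \ge 2$ and gives
\begin{equation}
a_n \ = \ a_{n-1} + a_{n-s} + a_{n-d} \ = \ 3 a_{n-1},
\end{equation}
so by induction $a_n = 3^{n-1}$, matching the sequence $\{3^0, 3^1, 3^2, \dots\}$ in the corollary.

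Next I would invoke Theorem \ref{farDiffRec} to conclude that $\{a_n\} = \{3^{n-1}\}$ admits a unique $(1,1)$ far-difference representation. Unpacking the definition: with $s = d = 1$, the conditions "every two terms of the same sign differ in index by at least $1$" and "every two terms of opposite sign differ in index by at least $1$" together say exactly that the indices used in the decomposition are all distinct (with signs in $\{+1,-1\}$ attached freely). Thus Theorem \ref{farDiffRec} guarantees that every integer can be written uniquely as $\sum_n \varepsilon_n 3^{n-1}$ with $\varepsilon_n \in \{-1, 0, 1\}$. Restricting to positive integers yields the corollary immediately (and one recovers the well-known balanced ternary expansion).

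There is essentially no obstacle here, since the theorem does all the work; the only step requiring any care is checking that the "$\min < n \le \max$" clause in case (ii) is vacuous when $s = d$, so that the recurrence indeed collapses to $a_n = 3 a_{n-1}$ from the start. If one wanted to be fully explicit, one could also note that negative integers are covered by negating every sign, and $0$ is represented by the empty sum, so restriction to $\mathbb{Z}^+$ is trivial.
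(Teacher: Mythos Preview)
Your proposal is correct and follows essentially the same approach as the paper: apply Theorem~\ref{farDiffRec} with $s=d=1$, check that the resulting sequence is $a_n=3^{n-1}$ via the recurrence $a_n=3a_{n-1}$, and read off the $(1,1)$ far-difference representation as balanced ternary. The paper's proof is a one-line version of exactly this argument.
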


\begin{proof}
The sequence $a_n=3^{n-1} $ satisfies $a_n=3a_{n-1}$, which by our theorem has an $(1,1)$ far-difference representation.
\end{proof}

\begin{cor}
Every positive integer can be represented uniquely as $\sum_i \pm 2^{n_i}$ where $n_1>n_2>\dots$ and $n_i\geq n_{i-1}+2$, so any two terms are apart by at least two.
\end{cor}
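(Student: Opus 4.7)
The plan is to invoke Theorem \ref{farDiffRec} with $s=d=2$, and verify that the sequence the theorem produces is exactly $\{2^{n-1}\}$. Since $\min(s,d)=\max(s,d)=2$, case (i) of the construction gives $a_1=1$ and $a_2=2$, case (ii) is vacuous, and for $n\ge 3$ case (iii) specializes to
\[
a_n \;=\; a_{n-1}+a_{n-s}+a_{n-d}\;=\;a_{n-1}+2a_{n-2}.
\]
First I would check by a one-line induction that $a_n=2^{n-1}$: the bases $a_1=1$, $a_2=2$ are immediate, and in the step $a_{n-1}+2a_{n-2}=2^{n-2}+2\cdot 2^{n-3}=2\cdot 2^{n-2}=2^{n-1}$.

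With the sequences identified, Theorem \ref{farDiffRec} immediately gives that $\{2^{n-1}\}$ admits a unique $(2,2)$ far-difference representation. Unpacking Definition \ref{def:sdfardiffrep} with $s=d=2$: every integer is uniquely expressible as $\sum_i \pm a_{n_i}=\sum_i \pm 2^{n_i-1}$ with the property that any two consecutive indices differ by at least $2$, regardless of the signs of the two terms. Restricting to positive integers and reindexing $n_i\mapsto n_i-1$ so that powers start at $2^0=1$ yields the statement as written.

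There is no real obstacle here: because $s$ and $d$ coincide, the same-sign and opposite-sign gap conditions collapse into the single constraint $n_{i-1}-n_i\ge 2$, so no separate sign-interaction case has to be handled. The only thing to verify is the arithmetic identity $a_{n-1}+2a_{n-2}=2^{n-1}$, after which the corollary is a direct specialization of Theorem \ref{farDiffRec}. For bookkeeping one may also observe that the corresponding intervals $I_n=[a_n-R^{(2,2)}_{n-2},R^{(2,2)}_n]$ reduce to $[2^{n-2}+1,2^{n-1}+2^{n-3}+\cdots]$, confirming that the disjoint-cover argument in the proof of Theorem \ref{farDiffRec} goes through without modification.
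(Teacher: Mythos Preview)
Your proposal is correct and follows essentially the same route as the paper: apply Theorem~\ref{farDiffRec} with $s=d=2$, observe that the resulting recurrence is $a_n=a_{n-1}+2a_{n-2}$, and identify the sequence as the powers of $2$. You are in fact slightly more careful than the paper, which writes $a_n=2^n$ rather than $a_n=2^{\,n-1}$; your explicit induction and reindexing step resolve this minor bookkeeping issue.
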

\begin{proof}
The sequence $a_n=2^n $ satisfies $a_n=a_{n-1}+2a_{n-2}$, which by our theorem has a $(2,2)$ far-difference representation.
\end{proof}

\subsection{Non-uniqueness}

We consider the inverse direction of Theorem \ref{farDiffRec}. Given positive integers $s$ and $d$, how many increasing sequences are there that have $(s,d)$ far-difference representation?

The following argument suggests that any sequence $a_n$ that has $(s,d)$ far-difference representation should satisfy the recurrence relation $a_n=a_{n-1}+a_{n-s}+a_{n-d}$. If we want the intervals $[a_n-R_{n-d},R_n]$ to be disjoint, which is essential for the unique representation, we must have
 \begin{equation}
 a_n-R_{n-d}\ = \ R_{n-1}+1.
 \end{equation}
Replacing $n$ by $n-s$ gives us
\begin{equation}
a_{n-s}-R_{n-d-s}\ = \ R_{n-1-s}+1.
\end{equation}
When we subtract those two equations and note that $R_k-R_{k-s}=a_k$, we get
\begin{equation}
a_n-a_{n-s}-a_{n-d}\ = \ a_{n-1}
\end{equation}
or $a_n=a_{n-1}+a_{n-s}+a_{n-d}$, as desired. What complicates this problem is the choice of initial terms for this sequence. Ideally, we want to choose the starting terms so that we can guarantee that every integer will have a unique far-difference representation. We have shown this to be the case which for the initial terms defined in Theorem \ref{farDiffRec}, which we refer as the \emph{standard} $(s,d)$ sequence. However, it is not always the case that the initial terms must follow the standard model to have a unique far-difference representation. In fact, it is not even necessary that the sequence starts with $1$.

In other types of decompositions where only positive terms are allowed, it is often obvious that a unique increasing sequence with initial terms starting at $1$ is the desired sequence. However, in far-difference representations where negative terms are allowed, it may happen that a small number (such as 1) arises through subtraction of terms that appear later in the sequence. Indeed, if $(s,d)=(1,1)$, we find several examples where the sequence need not start with 1.

\begin{exa}\label{exa:one}
The following sequences have a $(1,1)$ far-difference representation.
\begin{itemize}

\item $a_1=2,a_2=6$ and $a_n=3^{n-1}$ for $n\geq 3$
\item $a_1=3,a_2=4$ and $a_n=3^{n-1}$ for $n\geq 3$
\item $a_1=1,a_2=9,a_3=12$ and $a_n=3^{n-1}$ for $n\geq 4$
\end{itemize}
\end{exa}

\begin{exa}\label{exa:two} For each positive integer $k$, the sequence $B_{k}$, defined by $B_{k,i}= \pm 2 \cdot 3^{i-1}$ for $i=k+1$ and $B_{k,i}= \pm 3^{i-1}$ otherwise,  has a $(1,1)$ far-difference representation. \end{exa}

\noindent
We prove this by showing that there is a bijection between a decomposition using the standard sequence $b_n=\pm 3^{n-1}$ and a decomposition using $B_{k}$. First we give an example: For $k=2$, the sequence is $1,3,2\cdot 3^2,3^3,3^4,\dots$
\begin{align*}
763 &\ = \  1-3+3^2+3^3+3^6\nonumber\\
&\ = \ 1-3+(3^3-2\cdot 3^2)+3^3+3^6\nonumber\\
&\ = \ 1-3-2\cdot 3^2+2\cdot 3^3+3^6\nonumber\\
&\ = \ 1-3-2\cdot 3^2+3^4-3^3+3^6\nonumber\\
&\ = \ B_{2,0}-B_{2,1}-B_{2,2} - B_{2,3} + B_{2,4} + B_{2,6}.
\end{align*}
\noindent Conversely,
\begin{align*}
763 &\ = \ B_{2,0}-B_{2,1}-B_{2,2} - B_{2,3} + B_{2,4} + B_{2,6} \nonumber\\
&\ = \ 1-3-2.3^2-3^3+3^4+3^6\nonumber\\
&\ = \ 1-3- (3^3-3^2)-3^3+3^4+3^6\nonumber\\
&\ = \ 1-3+3^2-2.3^3+3^4+3^6\nonumber\\
&\ = \ 1-3+3^2-(3^4-3^3)+3^4+3^6\nonumber\\
&\ = \ 1-3+3^2+3^3+3^6.
\end{align*}

\noindent To prove the first direction, assume $x=\sum_{i\in I} 3^i-\sum_{j\in J}3^j$ where $I,J$ are disjoint subsets of $\mathbb{Z}^+$. If $k$ is not in $I\cup J$, this representation is automatically a representation of $x$ using $B_{k}$. Otherwise, assume $k\in I$, we replace the term $3^k$ by $3^{k+1}-2 \cdot 3^k=B_{k,k+2}-B_{k,k+1}$. If $k+1\notin I$, again $x$ has a $(1,1)$ far-difference representation of $B_{k}$. Otherwise, $x$ has the term $2 \cdot 3^{k+1}$ in its representation, we can replace this term by $3^{k+2}-3^{k+1}$. Continue this process, stopping  if $k+2\notin I$ and replacing the extra term if $k+2\in I$. Hence we can always decompose $x$ by $\pm B_{k,i}$.

Conversely, suppose $x=\sum_{i\in I} B_{k,i}-\sum_{j\in J} B_{k,j}$. If $k+1\notin I\cup J$, this representation is automatically a representation of $x$ using $\pm 3^n$. If not, assume $k+1\in I$, we replace $B_{k,k+1}=2\cdot 3^k$ by $3^{k+1}-3^k$. If $k+2\notin I$ we are done, if not, $x$ has a term $2\cdot 3^{k+1}$, replace this one by $3^{k+2}-3^{k+1}$ and continue doing this, we always get a decomposition using $\pm 3^n$. Since there is only one such decomposition, the decomposition using $\pm B_{k,i}$ must also be unique. \hfill $\Box$

\begin{remark}
From Example \ref{exa:two}, we know that there is at least one infinite family of sequences that have $(1,1)$ far-difference representations. Example \ref{exa:one} suggests that there are many other sequences with that property and, in all examples we have found to date, there exists a number $k$ such that the recurrence relation $a_n=3a_{n-1}$ holds for all $n\geq k$.
\end{remark}

\section{Conclusions and Further Research}

In this paper we extend the results of \cite{Al, MW1, BBGILMT} on the Fibonacci sequence to all $k$-Skipponacci sequences. Furthermore, we prove there exists a sequence that has an $(s,d)$ far-difference representation for any positive integer pair $(s,d)$. This new sequence definition further generalizes the idea of far-difference representations by uniquely focusing on the index restrictions that allow for unique decompositions. Still many open questions remain that we would like to investigate in the future. A few that we believe to be the most important and interesting include:
\begin{itemize}
\item[(1)]
Can we characterize all sequences that have $(1,1)$ far-difference representations? Does every such sequence converge to the recurrence $a_n=3a_{n-1}$ after first few terms?

\item[(2)] For $(s,d)\neq (1,1)$, are there any \emph{non-standard} increasing sequences that have a $(s,d)$ far-difference representation? If there is such a sequence, does it satisfy the recurrence relation stated in Theorem \ref{farDiffRec} after the first few terms?

\item[(3)] Will the results for Gaussianity in the number of summands still hold for any sequence that has an $(s,d)$ far-difference representation?

\item[(4)] How are the gaps in a general $(s,d)$ far-difference representation distributed?

\end{itemize}

\appendix
\section{Proof of Lemma \ref{Lem:R+R=S-1}}\label{sec:proofsfromsecfardiffreplemmas}

\begin{proof}[Proof of Lemma \ref{Lem:R+R=S-1}]
We proceed by induction on $n$. It is easy to check that \eqref{lemma1} holds for $n=1,\dots,2k+2$. For $n\geq 2k+2$, assume  that the relationship in \eqref{lemma1} holds for all integers up to to $n-1$. We claim that it further holds up to $n$. We see that
\begin{align}
R_k(n-1) + R_k(n-k-2)
\;\ = \ &\; S^{(k)}_{n-1} + S^{(k)}_{n-k-2} + S^{(k)}_{n-2k-3} + S^{(k)}_{n-3k-4} + . . . \nonumber  \\
\;\ = \ &\; S^{(k)}_{n-1} + \left(S^{(k)}_{n-k-2} + S^{(k)}_{n-2k-3} + S^{(k)}_{n-3k-4} + . . .\right) \nonumber  \\
\;\ = \ &\; S^{(k)}_{n-1} + \left[ R_k(n-k-2) + R_k(n-2k-3) \right] \nonumber \\
\;\ = \ &\; S^{(k)}_{n-1} + S^{(k)}_{n-k-1} - 1 \nonumber \\
\;\ = \ &\; S^{(k)}_{n} - 1,
\end{align} completing the proof. \end{proof}

\section{Proof of Proposition \ref{prop:mainres}}\label{sec:propmainres}

\noindent Before we prove Proposition \ref{prop:mainres}, we define a few helpful equations. Let $A_w(z)$ and $\hat{A}_w(z)$ denote the denominators of the generating functions in \eqref{genfn} and \eqref{Genfn2}, respectively. Making the substitution $(x,y) = (w^a,w^b)$ in each case gives us the following expressions:
\begin{equation} \label{Aw(z)}
A_w(z) \ = \  1-2z+z^2-(w^a+w^b)\left(z^{2k+2}-z^{2k+3}\right)-w^{a+b}\left(z^{2k+4}-z^{4k+4}\right)
\end{equation} and
\begin{equation} \label{hat(A)w(z)}
\hat{A}_w(z) \ = \  1-z-(w^a+w^b)z^{2k+2}-w^{a+b}\sum_{j=2k+4}^{4k+3}z^j.
\end{equation}
Notice that the coefficients of $A(z)$ are polynomials in one variable, and therefore continuous. This implies that the roots of $A(z)$ are continuous as well.
Since we are interested only in the region near the point $w=1$, it is enough to prove the results of part $(a)$ and $(b)$ at $w=1$. Thus we use the following expressions as well, which are formed by substituting $w=1$ into \eqref{Aw(z)} and \eqref{hat(A)w(z)}, respectively:
\begin{equation} \label{A(z)}
A(z) \ = \  1-2z+z^2-2z^{2k+2}+2z^{2k+3}-z^{2k+4}+z^{4k+4}
\end{equation} and
\begin{equation} \label{hat(A)(z)}
\hat{A}(z) \ = \  1-z-2z^{2k+2}-\sum_{j=2k+4}^{4k+3}z^j.
\end{equation}

\noindent \textbf{Proof of Proposition \ref{prop:mainres}(a).} It is enough to prove $A(z)$ from \eqref{A(z)} does not have multiple roots. We begin by factoring $A(z)$:
\begin{align}
A(z)
\;\ = \ &\; 1-2z+z^2-2z^{2k+2}+2z^{2k+3}-z^{2k+4}+z^{4k+4} \nonumber \\
\;\ = \ &\;(1-2z^{2k+2}+z^{4k+4})+(2z^{2k+3}-2z)+(z^2-z^{2k+4}) \nonumber \\
\;\ = \ &\;(z^{2k+2}-1)^2+2z(z^{2k+2}-1)-z^2(z^{2k+2}-1) \nonumber \\
\;\ = \ &\;(z^{2k+2}-1)(z^{2k+2}-1+2z-z^2) \nonumber \\
\;\ = \ &\;(z^{2k+2}-1)(z^{2k+2}-(z-1)^2) \nonumber \\
\;\ = \ &\;(z^{2k+2}-1)(z^{k+1}+z-1)((z^{k+1}-z+1).
\end{align}

Let $a(z)=z^{2k+2}-1,b(z)=z^{k+1}+z-1,$ and $ c(z)=z^{k+1}-z+1$. We begin by proving that $a(z),b(z)$ and $c(z)$ are pairwise co-prime. Here we use the fact that $\gcd(p,q)=\gcd(p,q-rp)$ for any polynomials $p,q,r\in\mathbb{Z}[x]$. We have
\bea \gcd(a,b,c) & \ = \ & \gcd \left(z^{2k+2}-1,z^{2k+2}-z^2+2z-1\right)\nonumber\\ & \ = \ & \gcd\left(z^{2k+2}-1, z^2-2z\right)\ = \ 1.
\eea The last equality holds because $z^2-2z=z(z-2)$ has only two roots $z=0,2$ neither of which are a root of $z^{2k+2}-1$. It follows that $\gcd(a,b) = \gcd(a,c) = 1$. Similarly, $\gcd(b,c)$ $=$ $\gcd(z^{k+1}+z-1$, $z^{k+1}-z+1)$ $=$ $\gcd\left(z^{k+1}+z-1,2z-2\right)$ $=$ $1$ because $2z-2$ has only one root $z=1$ which is not a root of $z^{k+1}+z-1$. It follows that $\gcd(b,c)=1$ as well.\\
\\
We prove that the polynomials $a(z), b(z)$ and $c(z)$ do not have repeated roots. The roots of $a(z) = z^{2k+2}-1$ are $e^{i\pi \ell/(2k+2)}$ for $\ell=1,\dots,2k+2$ and are therefore distinct. For $b(z)$ and $c(z)$, we prove that $\gcd\left(b(z),b'(z)\right)$ $=$ $\gcd\left(c(z),c'(z)\right)$ $=$ $1$, and therefore that they do not have repeated roots either. Indeed, we have
\begin{align}
\gcd[b(z),b'(z)]
\;\ = \ &\; \gcd \left(z^{k+1}+z-1,(k+1)z^k+1\right) \nonumber \\
\;\ = \ &\; \gcd \left(z^{k+1}+z-1, z^{k+1}+z-1-\frac{z}{k+1}\left[(k+1)z^k+1\right]\right) \nonumber \\
\;\ = \ &\; \gcd \left(z^{k+1}+z-1,\frac{k}{k+1}z-1 \right) \;\ = \ \; 1,
\end{align} where the last equality again holds since $\frac{k}{k+1}z-1$ has only one root $z=\frac{k+1}{k}$ which is not a root of $z^{k+1}+z-1$. By a similar method, we can prove that $\gcd[c(z),c'(z)]=1$. It follows that $A(z)=a(z)b(z)c(z)$ has no repeated roots. \qed \\

\noindent \textbf{Proof of Proposition \ref{prop:mainres}(b).}  We need to prove that $\hat{A}(z)$ $=$ $1-z-2z^{2k+2} -\sum_{j=2k+4}^{4k+3} z^j$ has only one real root $e_1$ on the interval $(0,\infty)$, $|e_1|<1$, and all other roots $e_i$ with $i\geq 2$ satisfy $|e_1|/ |e_i|<\lambda<1$ for some $\lambda$.\\
\\
 Indeed, first note that $\hat{A}(0)>0$ while $\hat{A}(1)<0$, thus $\hat{A}(x)$ must have at least one root $e_1$ on $(0,1)$. Moreover, since $\hat{A}'(z)$ $=$ $-1-2(2k+2)z^{2k+1}$ $-$ $\sum_{j=2k+4}^{4k+3}jz^{j-1}$, which is negative whenever $z\geq 0$, the function is strictly decreasing on $(0,\infty)$. It follows that $e_1$ is the only real root of $\hat{A}$ in this interval. Let $e_i$ be another root of $\hat{A}(z)$, and assume that $|e_i|\leq e_1$. Then $|e_i|^j\leq |e_1|^j$ $=e_1^j$ for any $j\in\mathbb{Z}^{+}$. Rearranging $\hat{A}(e_i)=0$ to be $1=e_i+2e_i^{2k+2}$ $+$ $\sum_{j=2k+4}^{4k+3}e_i^j$ and applying the generalized triangle inequality, we find
\begin{align}
1
\;\ = \ &\; |1| \;\ = \ \; \left| e_i+2e_i^{2k+2}+\sum_{j=2k+4}^{4k+3}e_i^j\right| \;\leq\; |e_i|+2|e_i|^{2k+2}+\sum_{j=2k+4}^{4k+3}|e_i|^j \nonumber \\
\;\leq&\; |e_1|+2|e_1|^{2k+2}+\sum_{j=2k+4}^{4k+3}|e_1|^j \;\ = \ \; e_1+2e_1^{2k+2}+ \sum_{j=2k+4}^{4k+3}e_1^j \;\ = \ \; 1.
\end{align}
Hence the equalities hold everywhere, which implies that $|e_i|=e_1$ and all the complex numbers $e_i, e_i^{2k+2}$ and $e_i^j$ lie on the same line in the complex plane. Since their sum is 1, they must all be real numbers, and since $e_i^{2k+2}>0$,  $e_i$ will be positive. It follows that $e_i = e_1$. However, this is a contradiction because, as we proved before, $e_1$ is a non-repeated root of $\hat{A}(z)$. It follows that $|e_i|>|e_1|$ for any $i\geq 2$.
Let $\lambda=\max_{i\geq 2} \sqrt{e_1/|e_i|}$, then $|e_1|/|e_i|<\lambda<1$ for all $i\geq 2$. \hfill $\Box$

\ \\

\noindent \textbf{Proof of Proposition \ref{prop:mainres}(c).}
Since $\hat{A}(e_1(w))=0$ and the function is continuous, in some small neighborhood $\Delta w$ we have $\hat{A}[e_1(w+\Delta w)]=\epsilon$ for some small $\epsilon$.  This implies
\bea \label{proof3.1c}
\epsilon & \ = \ &\hat{A}[e_1(w)]-\hat{A}[e_1(w+\Delta w)] \nonumber \\
 & \ = \ & \left[1-e_1(w)-(w^a+w^b)e_1(w)^{2k+2}-w^{a+b}\sum_{j = 2k+4}^{4k+3}e_1(w)^j\right] \ \ -\ \left[1-e_1(w+\Delta w)\right. \nonumber \\
& & \left. \ - \ ((w+\Delta w)^a
+(w+\Delta w)^b)e_1(w+\Delta w)^{2k+2} -(w+\Delta w)^{a+b}\sum_{j=2k+4}^{4k+3}e_1(w+\Delta w)^j\right] \nonumber \\
 & \ = \ & e_1(w+\Delta w)-e_1(w)+(w^a+w^b)\Bigg[e_1(w+\Delta w)^{2k+2}-e_1(w)^{2k+2} \nonumber \\
 &  & \ \  +w^{a+b}\sum_{j=2k+4}^{4k+3}\big[e_1(w+\Delta w)^j-e_1(w)^j\big]\Bigg] +\ e_1(w+\Delta w)^{2k+2}\Big[(w+\Delta w)^a-w^a \nonumber \\
& & \ \ +(w+\Delta w)^b-w^b\Big]+\sum_{j=2k+4}^{4k+3} \Big[e_1(w+\Delta w)^j][(w+\Delta w)^{a+b}-w^{a+b}\Big] \nonumber \\
 & \ = \ &\left[e_1(w+\Delta w)-e_1(w)\right] \cdot \left[1+(w^a+w^b)\sum_{i=0}^{2k+1}e_1(w+\Delta w)^ie_1(w)^{2k+1-i}\right. \nonumber \\
& & \ \  +\ \left. w^{a+b}\sum_{j=2k+4}^{4k+3}\sum_{i=0}^{j-1}e_1(w+\Delta w)^ie_1(w)^{j-1-i}\right]  \nonumber \\
& & \ \ + \left[\Delta w\right] \cdot \Bigg[ e_1(w+\Delta w)^{2k+2} \sum_{i=0}^{a-1}(w+\Delta w)^iw^{a-1-i} +  \sum_{i=0}^{b-1}(w+\Delta w)^iw^{b-1-i} \nonumber \\
& & \ \  + \sum_{j=2k+4}^{4k+3}e_1(w+\Delta w)^j \cdot \sum_{i=0}^{a+b-1}(w+\Delta w)^iw^{a+b-1-i}\Bigg].
\eea
Since $e_i(w)$ is continuous, the coefficient of $|e_i(w + \Delta) - e_i(w)|$ converges as $\Delta w \to 0$, and its limit is
\begin{equation} \label{e'denom}
1+(w^a+w^b)(2k+2)e_1(w)^{2k+1}+w^{a+b}\sum_{j=2k+4}^{4k+3}je_1(w)^{j-1}.
\end{equation}
This is the derivative of $-A_w(z)$ at $e_i(w)$, which is non-zero since $A_w(z)$ has no multiple roots. Then, similar to the arguments in \cite{MW1}, since $w^a$, $w^b$ and $w^{a + b}$ are differentiable at $w=1$, the coefficient of $\Delta w$ in \eqref{proof3.1c} converges as well, with limit
\begin{equation} \label{e'num}
\left(aw^{a-1}+bw^{b-1}\right)e_1(w)^{2k+2}+(a+b)w^{a+b-1}\sum_{j=2k+4}^{4k+3}e_1(w)^j.
\end{equation}
Thus we can rearrange the terms in \eqref{proof3.1c}  and take the limit. Note that when $\Delta w\to 0,\epsilon\to 0$ we have
\begin{align}\label{end3.1c}
e_1'(w)
\;\ = \ &\; \lim_{\Delta w\to 0} \frac{e_1(w+\Delta w)-e_1(w)}{\Delta w} \nonumber \\
\;\ = \ &\; -\frac{(aw^{a-1}+bw^{b-1})e_1(w)^{2k+2}+(a+b)w^{a+b-1}\sum_{j=2k+4}^{4k+3}e_1(w)^j}{1+(w^a+w^b)(2k+2)e_1(w)^{2k+1}+w^{a+b}
\sum_{j=2k+4}^{4k+3}je_1(w)^{j-1}},
\end{align} as desired. Further notice that since $e_1(w)$ is a positive real root of our generating function, it is easy to see that the denominator of this derivative is also a positive real number. Since taking further derivatives will utilize the quotient rule (and thus will only include larger powers of the denominator) it is clear that this root is $\ell$ times differentiable for any positive integer $\ell$.

\section{Proof of lemma \ref{lem_variance_grow}}\label{sec:proof_lem_variance_grow}

\noindent \textbf{Proof of \eqref{nonzero_mean}.}
To prove the first equality note that
\begin{equation}
\alpha_1'(w)\ = \ \left(\frac{1}{e_1(w)}\right)'\ = \ -\frac{e'_1(w)}{e^2_1(w)},
\end{equation}
which implies
\begin{equation}\label{alpha_and_e}
\frac{\alpha_1'(w)}{\alpha_1(w)}\ = \ \frac{-e'_1(w)}{e^2_1(w)}\cdot \frac{1}{1/e_1(w)} \ = \ -\frac{e'_1(w)}{e_1(w)}.
\end{equation}
By \eqref{eprime},
\begin{equation}
e'(1)\ = \ \frac{(a+b)e_1(1)^{2k+2}+\sum_{j=2k+4}^{4k+3}e_1(1)^j}{1+2(2k+2)e_1(1)^{2k+1}+\sum_{j=2k+4}^{4k+3} je_1(1)^{j-1}}>0
\end{equation}
because $a+b>0$ and $e_1(1)>0$. Thus $-e'_1(1)/e_1(1)\neq 0$.

\ \\

\textbf{Proof of \eqref{nonzero_variance}.} The first equality follows directly from \eqref{alpha_and_e}.
Let $h_{a,b}(w):= -\frac{we_1'(w)}{e_1(w)}$. We want to prove $h'_{a,b}(1)\neq 0$.
By \eqref{eprime}, we have
\begin{equation} \label{h(w)}
h_{a,b}(w)\ = \ \frac{we_1'}{e_1}\ = \ \frac{(aw^a+bw^b)e_1(w)^{2k+2}+(a+b)w^{a+b}\sum_{j=2k+4}^{4k+3}e_1(w)^j}{e_1+(w^a+w^b)(2k+2)e_1(w)^{2k+2}+w^{a+b}\sum_{j=2k+4}^{4k+3}je_1(w)^j}.
\end{equation}

Let the numerator and denominator of \eqref{h(w)} be $h_1(w)$ and $h_2(w)$, respectively. Further assume that $h'_{a,b}(1)=0$, or equivalently $\left(\frac{h_1(w)}{h_2(w)}\right)'\Big|_{w=1}=0$. Then by the quotient rule, we get $h_1'(1)h_2(1)-h_1(1)h_2'(1)=0$. This implies that
\begin{equation} \label{h1/h2}
\frac{h'_1(1)}{h'_2(1)} \;\ = \ \; \frac{h_1(1)}{h_2(1)} \;\ = \ \; -\frac{e_1'(1)}{e_1(1)} \;\Rightarrow\; 0 \;\ = \ \; h_1'(1)e_1(1) + h_2'(1)e_1'(1).
\end{equation}

For simplicity, we return to our previous notation in which we let $e_1$ represent $e_1(1)$ and $e_1'$ represent $e_1'(1)$. Direct calculation of $h_1'$ and $h_2'$ yields
\begin{align} \label{h'calc}
\;0\; \ = \ & \;(a^2+b^2)e_1^{2k+2}+(a+b)(2k+2)e_1^{2k+1}e_1'+(a+b)^2\sum_{j=2k+4}^{4k+3}e_1^j+(a+b)\sum_{j=2k+4}^{4k+3}je_1^{j-1} \nonumber \\
 & \; + e_1'+(a+b)(2k+2)e_1^{2k+2}+2(2k+2)^2e_1^{2k+1}e_1'+(a+b)\sum_{j=2k+4}^{4k+3}je_1^j+\sum_{j=2k+4}^{4k+3}j^2e_1^{j-1} \nonumber \\
\ = \ & \; e_1^{2k+1}\left[(a^2+b^2)e_1^2+2(a+b)(2k+2)e_1e_1'+2(2k+2)(e_1')^2\right] \nonumber \\
& \; +\sum_{j=2k+4}^{4k+3}e_1^{j-1}\left[(a+b)^2e_1^2+2(a+b)je_1e_1'+j^2(e_1')^2\right]\nonumber \\
\ \geq\ & \; e_1^{2k+1}\frac{1}{2}\left[ (a+b)e_1+2(2k+2)e_1'\right]^2+\sum_{j=2k+4}^{4k+3}e_1^{j-1}\left[(a+b)e_1+je_1'\right]^2
\; \geq \; 0. \end{align}
Here we used the fact that $a^2+b^2\geq \frac{1}{2}(a+b)^2$, and that $x^2 > 0$ for any real $x \neq 0 $. In the last line of \eqref{h'calc}, this implies that $(a+b)e_1+je_1'=0$. We can re-express this as $j\ = \ -\frac{(a+b)e_1}{e_1'}$ (since $e_1'(1)\neq 0$) for every $j\in\{2k+4,\dots,4k+3\}$. This is a contradiction, since $j$ must be an integer, and it follows that $h'_{a,b}(1)\neq 0$. \qed \\




\ \\

\end{document}